\documentclass[a4paper,11pt]{article}
\usepackage[T1]{fontenc}
\usepackage{amsfonts}
\usepackage{textcomp}
\usepackage{amsmath, amsthm, amssymb, mathrsfs}
\usepackage{graphicx, url}

\title{On the Transition Laws of $p$-Tempered $\alpha$-Stable OU-Processes}
\author{Michael Grabchak\footnote{Email address: mgrabcha@uncc.edu}\ \ 
{\it University of North Carolina Charlotte}}

\begin{document}
\newtheorem{prop}{Proposition}
\newtheorem{thrm}{Theorem}
\newtheorem{defn}{Definition}
\newtheorem{cor}{Corollary}
\newtheorem{lemma}{Lemma}
\newtheorem{remark}{Remark}
\newtheorem{exam}{Example}

\newcommand{\rd}{\mathrm d}
\newcommand{\rE}{\mathrm E}
\newcommand{\ts}{\mathrm{TS}^p_\alpha}
\newcommand{\ID}{\mathrm{ID}}
\newcommand{\Ga}{\mathrm{Ga}}
\newcommand{\GGa}{\mathrm{GGa}}
\newcommand{\IGa}{\mathrm{IGa}}
\newcommand{\LL}{\mathrm{LL}}
\newcommand{\tr}{\mathrm{tr}}
\newcommand{\iid}{\stackrel{\mathrm{iid}}{\sim}}
\newcommand{\eqd}{\stackrel{d}{=}}
\newcommand{\cond}{\stackrel{d}{\rightarrow}}
\newcommand{\conv}{\stackrel{v}{\rightarrow}}
\newcommand{\conw}{\stackrel{w}{\rightarrow}}
\newcommand{\conp}{\stackrel{p}{\rightarrow}}
\newcommand{\simp}{\stackrel{p}{\sim}}
\newcommand{\plim}{\mathop{\mathrm{p\mbox{-}lim}}}
\newcommand{\lgg}{\mathrm{log}}
\newcommand{\dlim}{\operatorname*{d-lim}}

\maketitle

\begin{abstract}
We derive an explicit representation for the transition law of a $p$-tempered $\alpha$-stable process of Ornstein-Uhlenbeck-type and use it to develop a methodology for simulation. Our results apply in both the univariate and multivariate cases. Special attention is given to the case where $p\le\alpha$, which is more complicated and requires additional care.\\

\noindent\textbf{Keywords:} Tempered stable distributions, Ornstein-Uhlenbeck processes, rejection sampling
\end{abstract}

\section{Introduction}

Tempered stable distributions are a rich and flexible class of models that are obtained by modifying the tails of infinite variance stable distributions to make them lighter. This leads to distributions that are more realistic for a variety of application areas. We are particularly interested in the class of $p$-tempered $\alpha$-stable distributions with $p>0$ and $\alpha\in[0,2)$. This class was introduced in \cite{Grabchak:2012} and was further studied in \cite{Grabchak:2016book}. It contains most of the best known and most heavily used families of tempered stable distributions including the models studied in \cite{Rosinski:2007} and \cite{Bianchi:Rachev:Kim:Fabozzi:2011}, which, themselves, contain important subclasses such as gamma distributions, inverse Gaussian distributions, classical tempered stable distributions (CTS), and rapidly decreasing tempered stable distributions (RDTS).

Associated with each $p$-tempered $\alpha$-stable distribution is a non-Gaussian process of Ornstein-Uhlenbeck-type (henceforth TSOU-process). These processes are mean reverting and are useful for a variety of applications. We are particularly motivated by applications to mathematical finance, where such processes have been used to model stochastic volatility,  stochastic interest rates, and commodity prices, see \cite{Barndorff-Nielsen:Shephard:2001} and the references in \cite{Kawai:Masuda:2012}, \cite{Bianchi:Rachev:Fabozzi:2017}, and \cite{Grabchak:2020}. In this paper, we derive an explicit representation for the transition law of a TSOU-process. We then use this representation to develop a methodology for simulating increments from the process. Our results apply in both the univariate and multivariate cases. Further, while they hold for all values of $\alpha$, we are particularly concerned with the case where $p\le\alpha$, as it requires additional care. In the important case when $p=1$, this corresponds to the case of infinite variation.

For $\alpha<p$, in the special cases of gamma, inverse Gaussian, 
CTS, and RDTS distributions, similar results are given in \cite{Qu:Dassios:Zhao:2019}, \cite{Zhang:Zhang:2008}, 
\cite{Zhang:Zhang:2009}, and \cite{Bianchi:Rachev:Fabozzi:2017}. For details, see Remark \ref{remark: cts} below. The general case with $0<\alpha<p$ was considered in \cite{Grabchak:2020}. However, even for that case, our results often provide a simpler methodology for simulation. To the best of our knowledge, the case $\alpha\ge p$ has only been considered for CTS distributions, see \cite{Kawai:Masuda:2012}.

The rest of this paper is organized as follows. In Section \ref{sec: TS}, we recall the definition of $p$-tempered $\alpha$-stable distributions and give some properties. Then, in Section \ref{sec: IGa}, we introduce the incomplete gamma distribution, which is important for characterizing the transition laws of TSOU-processes. In Section \ref{sec: TSOU}, we formally define TSOU-processes and characterize their transition laws. In Section \ref{sec: sim TS}, we discuss how to use these results for simulation. A small-scale simulation study is given in Section \ref{sec: sim study}.  Proofs are postponed to Section \ref{sec: proofs}.

Before proceeding, we introduce some notation. Let $\mathbb R^d$ be the space of $d$-dimensional column vectors of real numbers equipped with the usual inner product $\langle\cdot,\cdot\rangle$ and the usual norm $|\cdot|$. Let $\mathbb S^{d-1}=\{x\in\mathbb R^d: |x|=1\}$ denote the unit sphere in $\mathbb R^d$. Let $\mathfrak B(\mathbb R^d)$ and $\mathfrak B(\mathbb S^{d-1})$ denote the Borel sets in $\mathbb R^d$ and $\mathbb S^{d-1}$, respectively. For a Borel measure $M$ on $\mathbb R^d$ and $s\ge0$, we write $sM$ to denote the Borel measure on $\mathbb R^d$ given by $(sM)(B)=sM(B)$ for $B\in\mathfrak B(\mathbb R^d)$. If $a,b\in\mathbb R$, we write $a\vee b$ and $a\wedge b$ to denote, respectively, the maximum and the minimum of $a$ and $b$. If $\mu$ is a probability measure on $\mathbb R^d$, we write $X\sim\mu$ to denote that $X$ is an $\mathbb R^d$-valued random variable with distribution $\mu$ and we write $X_1,X_2,\dots\iid\mu$ to denote that $X_1,X_2,\dots$ are independent and identically distributed $\mathbb R^d$-valued random variables each with distribution $\mu$. For two random variables $X$ and $Y$, we write $X \eqd Y$ to denote that $X$ and $Y$ have the same distribution. We write $U(a,b)$ to denote the uniform distribution on the interval $(a,b)$, $\mathrm{Pois}(\psi)$ to denote the Poisson distribution with a mean of $\psi$, and $\delta_x$ to denote the point mass at $x$. We write $1_A$ to denote the indicator function on set $A$. For sums, we interpret $\sum_{n=1}^0$ as $0$. If $f$ and $g$ are positive functions and $a\in[0,\infty]$, we write $f(x)\sim g(x)$ as $x\to a$ to denote $\lim_{x\to a}f(x)/g(x)\to 1$. For $x\ge0$, we write $\lfloor x\rfloor$ to denote the integer part of $x$.

\section{Tempered Stable Distributions}\label{sec: TS}

An infinitely divisible distribution $\mu$ on $\mathbb R^d$ is a probability measure with a characteristic function of the form $\hat\mu(z) = \exp\{C_\mu(z)\}$, where, for $z\in\mathbb R^d$,
\begin{eqnarray*}
C_\mu(z) = -\frac{1}{2}\langle z,Az\rangle + i\langle b,z\rangle + \int_{\mathbb R^d}\left(e^{i\langle z,x\rangle}-1-i\langle z,x\rangle h(x)\right)M(\rd x).
\end{eqnarray*}
Here, $A$ is a symmetric nonnegative-definite $d\times d$-dimensional matrix called the Gaussian part, $b\in\mathbb R^d$ is called the shift, and $M$ is a Borel measure, called the L\'evy measure, which satisfies
\begin{eqnarray}\label{eq: cond for levy measure}
M(\{0\})=0 \mbox{\ and\ } \int_{\mathbb R^d}(|x|^2\wedge1)M(\rd x)<\infty.
\end{eqnarray}
The function $h:\mathbb R^d\mapsto\mathbb R$, which we call the $h$-function, can be any Borel function satisfying
\begin{eqnarray*}
\int_{\mathbb R^d}\left|e^{i\langle z,x\rangle}-1-i\langle z,x\rangle h(x)\right| M(\rd x)<\infty\ \mbox{  for all  } z\in\mathbb R^d.
\end{eqnarray*}
For a fixed $h$-function, the parameters $A$, $M$, and $b$ uniquely determine the distribution $\mu$, and we write $\mu=\ID(A,M,b)_h$. The choice of $h$ does not affect $A$ and $M$, but different choices of $h$ require different values for $b$, see Section 8 in \cite{Sato:1999}. 

Associated with every infinitely divisible distribution $\mu=\ID(A,M,b)_h$ is a L\'evy process, $\{X_t:t\ge0\}$, which is stochastically continuous and has independent and stationary increments. The characteristic function of $X_t$ is $\left(\hat\mu(z)\right)^t$. It follows that, for each $t\ge0$, $X_t\sim\ID(tA,tM,tb)_h$. For more on infinitely divisible distributions and their associated L\'evy processes see \cite{Sato:1999}.

A $p$-tempered $\alpha$-stable distribution on $\mathbb R^d$ is an infinitely divisible distribution with no Gaussian part and a L\'evy measure of the form
\begin{eqnarray}\label{eq: levy ts}
L(B) = \int_{\mathbb S^{d-1}}\int_0^\infty 1_{B}(u\xi) u^{-1-\alpha} q(\xi,u^p)\rd u \sigma(\rd \xi), \ \ B\in\mathfrak B(\mathbb R^d),
\end{eqnarray}
where $p>0$, $\alpha\in[0,2)$, $\sigma$ is a finite Borel measure on $\mathbb S^{d-1}$, 
\begin{eqnarray}\label{eq: q to p temp}
q(\xi,u) = \int_{(0,\infty)} e^{-su} Q_\xi(\rd s),
\end{eqnarray}
and $\bar Q=\{Q_\xi:\xi\in\mathbb S^{d-1}\}$ is a measurable family of probability measures on $(0,\infty)$. When $\alpha=0$ we need the additional assumption that
$$
\int_1^\infty q(\xi,u)u^{-1}\rd u <\infty \ \mbox{for } \sigma\mbox{-a.e.}\ \xi
$$
to ensure that $L$ satisfies \eqref{eq: cond for levy measure}. The class of $p$-tempered $\alpha$-stable distributions was introduced in \cite{Grabchak:2012} and was further studied in the monograph \cite{Grabchak:2016book}. The case where $p=1$ had previously been introduced in \cite{Rosinski:2007} and the case where $p=2$ had previously been introduced in \cite{Bianchi:Rachev:Kim:Fabozzi:2011}. The case where $p=1$ and $\alpha=0$ corresponds to a large subclass of Thorin's class of generalized gamma convolutions, see \cite{Barndorff-Nielsen:Maejima:Sato:2006} and the references therein.

It is often convenient to work with a different representation of the L\'evy measure. Toward this end, define the Borel measures
\begin{eqnarray*}
Q(B) = \int_{\mathbb S^{d-1}}\int_{(0,\infty)} 1_{B}(s\xi) Q_\xi(\rd s)\sigma(\rd \xi), \ \ B\in\mathfrak B(\mathbb R^d)
\end{eqnarray*}
and
\begin{eqnarray}\label{eq: R}
R(B) = \int_{\mathbb R^{d}} 1_{B}\left(\frac{x}{|x|^{1+1/p}}\right) |x|^{\alpha/p}Q(\rd x), \ \ B\in\mathfrak B(\mathbb R^d).
\end{eqnarray}
From $R$ we can recover $Q$ by
\begin{eqnarray*}
Q(B) = \int_{\mathbb R^{d}}1_{B}\left( \frac{x}{|x|^{1+p}}\right)|x|^{\alpha} R(\rd x), \ \ B\in\mathfrak B(\mathbb R^d).
\end{eqnarray*}
It can be shown that the L\'evy measure, as given by \eqref{eq: levy ts}, can be written as
\begin{eqnarray}\label{eq: L pTS}
L(B) = \int_{\mathbb R^{d}}\int_0^\infty 1_{B}(u x) u^{-1-\alpha} e^{-u^p} \rd u R(\rd x), \ \ B\in\mathfrak B(\mathbb R^d)
\end{eqnarray}
and the measure $\sigma$ can be written as
\begin{eqnarray*}
\sigma(B) = \int_{\mathbb R^d}1_B\left(\frac{x}{|x|}\right)|x|^\alpha R(\rd x),\ \ B\in\mathfrak B(\mathbb S^{d-1}),
\end{eqnarray*}
see Chapter 3 in \cite{Grabchak:2016book}. Further, for fixed $\alpha\in[0,2)$ and $p>0$, the measure $R$ uniquely determines the L\'evy measure $L$. The measure $R$ is called the Rosi\'nski measure of the distribution, after the author of \cite{Rosinski:2007}. A Borel measure $R$ on $\mathbb R^d$ is the Rosi\'nski measure of some $p$-tempered $\alpha$-stable distribution if and only if $R(\{0\})=0$ and 
\begin{eqnarray}\label{eq: cond for R measure}
\begin{array}{ll}\int_{\mathbb R^d}|x|^\alpha R(\rd x)<\infty & \mbox{if } \alpha\in(0,2)\\
\int_{|x|\le2}R(\rd x)+\int_{|x|>2}\log|x|R(\rd x)<\infty & \mbox{if } \alpha=0
\end{array}.
\end{eqnarray}
For simplicity, when $\alpha=1$, we generally make the slightly stronger assumption that
\begin{eqnarray}\label{eq: alpha 1 cond}
 \int_{|x|\le2}|x|R(\rd x) +\int_{|x|>2}|x|\log|x| R(\rd x)<\infty,
\end{eqnarray}
which guarantees that the corresponding distribution has a finite mean. When $R$ satisfies \eqref{eq: cond for R measure} (and, if $\alpha=1$, \eqref{eq: alpha 1 cond}), we can use the $h$-function
\begin{eqnarray*}
h_\alpha(x) = 1_{[\alpha\ge1]}= \left\{\begin{array}{ll}
0& \mbox{if }\alpha\in[0,1)\\
1 & \mbox{if } \alpha\in[1,2)
\end{array}\right..
\end{eqnarray*}

\begin{defn}
Fix $\alpha\in[0,2)$ and $p>0$. Let $R$ be a Borel measure on $\mathbb R^d$ with $R(\{0\})=0$ such that \eqref{eq: cond for R measure} holds. If $\alpha=1$, assume further that \eqref{eq: alpha 1 cond} holds. We write $\mathrm{TS}^p_\alpha(R,b)$ to denote the distribution $\ID(0,L,b)_{h_\alpha}$, where $L$ is of the form \eqref{eq: L pTS} and $b\in\mathbb R^d$.
\end{defn}

We note that one can define a more general class of distributions with L\'evy measures of the form \eqref{eq: L pTS}, but where $R$ does not satisfy \eqref{eq: cond for R measure}. In \cite{Grabchak:2016book}, distributions where $R$ satisfies \eqref{eq: cond for R measure} are called ``proper $p$-tempered $\alpha$-stable distributions.''

\section{Incomplete Gamma Distribution}\label{sec: IGa}

In this section we introduce the incomplete gamma distribution, which is important for studying the transition laws of TSOU-processes and is needed in Theorem \ref{thrm: main} below. It may also be of independent interest. We begin by recalling that the probability density function (pdf) of a gamma distribution is of the form
$$
\frac{\zeta^\gamma}{\Gamma(\gamma)}u^{\gamma-1}e^{-u \zeta}, \ \ u>0,
$$
where $\gamma,\zeta>0$ are parameters. We denote this distribution by $\Ga(\gamma,\zeta)$. Let 
\begin{eqnarray*}
G_{\gamma,\zeta}(u) &=& \frac{\zeta^\gamma}{\Gamma(\gamma)}\int_0^u x^{\gamma-1}e^{-x\zeta}\rd x\\
&=& \frac{1}{\Gamma(\gamma)}\int_0^{u\zeta} x^{\gamma-1}e^{-x}\rd x, \ \ u>0
\end{eqnarray*}
be the cumulative distribution function (cdf) of this distribution. When $\gamma$ is a positive integer, then Lemma \ref{lemma: exp approx} in Section \ref{sec: proofs} below gives
\begin{eqnarray}\label{eq:gamma cdf sum}
G_{\gamma,\zeta}(u) = 1-e^{-u\zeta}\sum_{n=0}^{\gamma-1}\frac{\zeta^n u^n}{n!}, \ \ u>0.
\end{eqnarray}
Consider the new pdf defined by
\begin{eqnarray*}
f_{\beta,\gamma,p,\eta}(u) &=& \frac{1}{K_{\beta,\gamma,p,\eta}}  G_{\gamma,(\eta-1)}\left(u^p\right) e^{-u^p} u^{-1-\beta}, \ \ u>0
\end{eqnarray*}
where $K_{\beta,\gamma,p,\eta}>0$ is a normalizing constant and $\beta\in\mathbb R$, $\gamma>0$, $p>0$, $\eta>1$ are parameters satisfying  $p\gamma>\beta$. Since $G_{\gamma,(\eta-1)}$ is, essentially, an incomplete gamma function, we refer to the distribution with pdf $f_{\beta,\gamma,p,\eta}$ as the incomplete gamma distribution and denote it by $\IGa(\beta,\gamma,p,\eta)$. When $\gamma$ is a positive integer, \eqref{eq:gamma cdf sum} implies that 
\begin{eqnarray*}
f_{\beta,\gamma,p,\eta}(u)  &=&  \frac{1}{K_{\beta,\gamma,p,\eta}} \left(e^{-u^p} - e^{-u^p \eta} \sum_{n=0}^{\gamma-1}  \frac{(\eta-1)^n }{n!}u^{np} \right)  u^{-1-\beta}, \ \ u>0.
\end{eqnarray*}
Note that if $X\sim \IGa(\beta,\gamma,p,\eta)$ and $\kappa>\beta-p\gamma$, then
$$
\rE[X^\kappa] = \frac{K_{(\beta-\kappa),\gamma,p,\eta}}{K_{\beta,\gamma,p,\eta}}.
$$
We now give some facts about $K_{\beta,\gamma,p,\eta}$.

\begin{prop}\label{prop: K}
We have
\begin{eqnarray*}
K_{\beta,\gamma,p,\eta} = \frac{\Gamma(\gamma-\beta/p)}{p\Gamma(\gamma)}\int^{1}_{1/\eta}(1-u)^{\gamma-1}u^{-1-\beta/p} \rd u,
\end{eqnarray*}
$$
K_{\beta,\gamma,p,\eta}  \sim \frac{\Gamma(\gamma-\beta/p)}{p\Gamma(\gamma+1)} (\eta-1)^\gamma \mbox{ as } \eta\downarrow1.
$$
and, as $\eta\to\infty$
$$
K_{\beta,\gamma,p,\eta}  \sim \left\{\begin{array}{ll}\frac{\Gamma(\gamma-\beta/p)}{\beta\Gamma(\gamma)} \eta^{\beta/p}, & \beta>0\\
p^{-1} \log(\eta), & \beta=0\\
p^{-1}\Gamma(|\beta|/p), & \beta<0
\end{array}\right..
$$
Further, if $\gamma$ is a positive integer, then
\begin{eqnarray*}
K_{\beta,\gamma,p,\eta} = \frac{\Gamma(\gamma-\beta/p)}{(\gamma-1)!} \sum_{n=0}^{\gamma-1} \binom{\gamma-1}{n} (-1)^n \frac{1-\eta^{-(np-\beta)/p}}{np-\beta},
\end{eqnarray*}
where in the case $np=\beta$ we interpret $\frac{1-\eta^{-(np-\beta)/p}}{np-\beta}$ by its limiting value of $p^{-1}\ln \eta$.
\end{prop}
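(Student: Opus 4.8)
The plan is to first establish the closed-form integral representation for $K_{\beta,\gamma,p,\eta}$, because every remaining assertion follows by analyzing that single integral near its two endpoints. By definition $K_{\beta,\gamma,p,\eta}=\int_0^\infty G_{\gamma,(\eta-1)}(u^p)e^{-u^p}u^{-1-\beta}\rd u$, and this is a finite positive number precisely because the standing hypothesis $p\gamma>\beta$ makes the integrand behave like a constant times $u^{p\gamma-1-\beta}$ at the origin, using $G_{\gamma,\zeta}(t)\sim \zeta^\gamma t^\gamma/\Gamma(\gamma+1)$ as $t\downarrow0$, while $e^{-u^p}$ controls the tail.

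To obtain the closed form I would substitute $t=u^p$ to write $K_{\beta,\gamma,p,\eta}=\frac{1}{p}\int_0^\infty G_{\gamma,(\eta-1)}(t)e^{-t}t^{-\beta/p-1}\rd t$, expand $G_{\gamma,(\eta-1)}(t)=\frac{1}{\Gamma(\gamma)}\int_0^{t(\eta-1)}x^{\gamma-1}e^{-x}\rd x$, and apply Fubini (legitimate by nonnegativity) after the inner rescaling $x=t(\eta-1)v$ with $v\in(0,1)$. This produces $\frac{(\eta-1)^\gamma}{p\Gamma(\gamma)}\int_0^1 v^{\gamma-1}\left(\int_0^\infty t^{\gamma-\beta/p-1}e^{-t(1+(\eta-1)v)}\rd t\right)\rd v$, whose inner integral is the gamma integral $\Gamma(\gamma-\beta/p)(1+(\eta-1)v)^{-(\gamma-\beta/p)}$; note it converges exactly because $p\gamma>\beta$ forces $\gamma-\beta/p>0$. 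The change of variables $u=1/(1+(\eta-1)v)$, under which $v=0$ and $v=1$ map to $u=1$ and $u=1/\eta$, then cancels the $(\eta-1)^\gamma$ prefactor and delivers $K_{\beta,\gamma,p,\eta}=\frac{\Gamma(\gamma-\beta/p)}{p\Gamma(\gamma)}\int_{1/\eta}^1(1-u)^{\gamma-1}u^{-1-\beta/p}\rd u$.

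The limits then reduce to boundary analysis of $I(\eta):=\int_{1/\eta}^1(1-u)^{\gamma-1}u^{-1-\beta/p}\rd u$. As $\eta\downarrow1$ the interval shrinks onto $u=1$, where $u^{-1-\beta/p}\to1$, so $I(\eta)\sim\int_{1/\eta}^1(1-u)^{\gamma-1}\rd u=\gamma^{-1}(1-1/\eta)^\gamma\sim\gamma^{-1}(\eta-1)^\gamma$, and combining with the prefactor and $\gamma\Gamma(\gamma)=\Gamma(\gamma+1)$ yields the stated $\eta\downarrow1$ equivalent. As $\eta\to\infty$ the interval opens to $(0,1]$ and the behavior is governed by the singularity at $u=0$: for $\beta<0$ the integrand is integrable there so $I(\eta)\to B(\gamma,-\beta/p)=\Gamma(\gamma)\Gamma(|\beta|/p)/\Gamma(\gamma-\beta/p)$, which cancels against the prefactor to give $p^{-1}\Gamma(|\beta|/p)$; for $\beta=0$ I would split $(1-u)^{\gamma-1}u^{-1}=[(1-u)^{\gamma-1}-1]u^{-1}+u^{-1}$, the first term integrable and the second contributing the leading $\int_{1/\eta}^1 u^{-1}\rd u=\log\eta$; and for $\beta>0$ the singular part gives $\int_{1/\eta}^1 u^{-1-\beta/p}\rd u=\frac{p}{\beta}(\eta^{\beta/p}-1)$, while the correction from $(1-u)^{\gamma-1}-1=O(u)$ is of strictly lower order, so $I(\eta)\sim\frac{p}{\beta}\eta^{\beta/p}$.

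For the final identity, when $\gamma$ is a positive integer I would expand $(1-u)^{\gamma-1}=\sum_{n=0}^{\gamma-1}\binom{\gamma-1}{n}(-1)^nu^n$ inside $I(\eta)$ and integrate term by term via $\int_{1/\eta}^1 u^{n-1-\beta/p}\rd u=\frac{1-\eta^{-(np-\beta)/p}}{(np-\beta)/p}$ when $np\neq\beta$, taking the limiting value $\log\eta$ when $np=\beta$; substituting into the closed form, the factors of $p$ cancel and $\Gamma(\gamma)=(\gamma-1)!$ produces the displayed sum, with the degenerate term correctly interpreted as $p^{-1}\log\eta$. I expect the main obstacle to be the $\eta\to\infty$ analysis, where $I(\eta)$ itself diverges and the leading asymptotics must be extracted by carefully isolating the singular contribution at $u=0$ and bounding the remainder; by contrast the Fubini step and the two substitutions in the representation are routine.
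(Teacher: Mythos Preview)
Your proposal is correct and follows essentially the same approach as the paper. The paper reaches the closed-form integral by a slightly different chain of substitutions (writing $v=x+u$ inside the double integral, swapping, then rescaling) rather than your rescaling $x=t(\eta-1)v$ followed by the gamma integral and the map $u=1/(1+(\eta-1)v)$, but both are routine Fubini-plus-change-of-variables arguments landing on the same formula; for the asymptotics the paper invokes L'H\^opital's rule (differentiating $I(\eta)$ in $\eta$) where you instead isolate the endpoint contribution directly, and the integer-$\gamma$ case is the Binomial Theorem in both.
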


We now develop an accept-reject algorithm to simulate from $\IGa(\beta,\gamma,p,\eta)$. Toward this end, recall that the generalized gamma distribution has a pdf of the form
$$
\frac{p\zeta^{\gamma/p}}{\Gamma(\gamma/p)}u^{\gamma-1}e^{-u^p \zeta},\ u>0
$$
where $\gamma,p,\zeta>0$ are parameters, see  \cite{Stacy:1962}. We denote this distribution by $\GGa(\gamma,p,\zeta)$. It is readily checked that 
\begin{eqnarray}\label{eq: sim GGa}
\mbox{if }X\sim \Ga(\gamma/p,\zeta)\mbox{, then }X^{1/p}\sim \GGa(\gamma,p,\zeta).
\end{eqnarray}

\begin{prop}\label{prop: IGa bound}
We have
$$
f_{\beta,\gamma,p,\eta}(u) \le V_1 g_1(u), \ \ u>0,
$$
where $g_1$ is the pdf of the $\GGa(p\gamma-\beta,p,1)$ distribution and
\begin{eqnarray*}
V_1 &=&\frac{(\eta-1)^\gamma}{p K_{\beta,\gamma,p,\eta}}\frac{\Gamma(\gamma-\beta/p)}{\Gamma(\gamma+1)}.
\end{eqnarray*}
\end{prop}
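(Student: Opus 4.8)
The plan is to reduce the claimed inequality to a crude pointwise bound on the gamma cdf $G_{\gamma,(\eta-1)}$. First I would write out the comparison density explicitly: applying the displayed formula for the $\GGa$ pdf with parameters $(p\gamma-\beta,p,1)$, which is legitimate because the assumption $p\gamma>\beta$ makes the first parameter positive, gives
$$
g_1(u) = \frac{p}{\Gamma(\gamma-\beta/p)}\, u^{p\gamma-\beta-1} e^{-u^p}, \qquad u>0.
$$

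Next I would substitute this expression, together with the definition of $V_1$, into the target inequality $f_{\beta,\gamma,p,\eta}(u)\le V_1 g_1(u)$ and cancel the common factors. The factor $e^{-u^p}$ appears on both sides and drops out. Multiplying through by $K_{\beta,\gamma,p,\eta}$ and using $K_{\beta,\gamma,p,\eta}V_1=\frac{(\eta-1)^\gamma}{p}\frac{\Gamma(\gamma-\beta/p)}{\Gamma(\gamma+1)}$ together with $\Gamma(\gamma+1)=\gamma\Gamma(\gamma)$, the inequality collapses, after dividing by $u^{-1-\beta}>0$ and writing $v=u^p$, to
$$
G_{\gamma,(\eta-1)}(v) \le \frac{(\eta-1)^\gamma}{\Gamma(\gamma+1)}\, v^\gamma, \qquad v>0.
$$

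Finally I would establish this last bound directly from the integral definition of the cdf. Writing $G_{\gamma,(\eta-1)}(v)=\frac{(\eta-1)^\gamma}{\Gamma(\gamma)}\int_0^v x^{\gamma-1}e^{-x(\eta-1)}\rd x$ and using the trivial bound $e^{-x(\eta-1)}\le 1$ for $x\ge 0$, valid since $\eta>1$, the integral is at most $\int_0^v x^{\gamma-1}\rd x=v^\gamma/\gamma$, which reproduces exactly the right-hand side and closes the argument.

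The computation is entirely routine, and there is no real obstacle beyond careful bookkeeping of the $\Gamma$-function constants. The only points deserving a moment's thought are verifying that $g_1$ is a bona fide probability density, which follows from $p\gamma-\beta>0$ and is what makes the bound useful for rejection sampling, and confirming that the gamma factors cancel cleanly enough that the single crude estimate $e^{-x(\eta-1)}\le 1$ is precisely sharp enough to yield the stated constant $V_1$.
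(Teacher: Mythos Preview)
Your proof is correct and is essentially the same as the paper's. The paper bounds $f_{\beta,\gamma,p,\eta}$ directly by replacing $e^{-x}\le 1$ inside the integral for $G_{\gamma,(\eta-1)}(u^p)$ and then identifies the result as $V_1 g_1(u)$; you instead first reduce the inequality to $G_{\gamma,(\eta-1)}(v)\le \frac{(\eta-1)^\gamma}{\Gamma(\gamma+1)}v^\gamma$ and then apply the same crude bound $e^{-x(\eta-1)}\le 1$, which is the identical estimate written in the other form of the cdf.
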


Let $\varphi_1(u) =  f_{\beta,\gamma,p,\eta}(u^{1/p})/(V_1 g_1(u^{1/p})))$ and note that
\begin{eqnarray*}
\varphi_1(u) &=& \frac{\Gamma(\gamma+1)}{(\eta-1)^\gamma} G_{\gamma,(\eta-1)}(u) u^{-\gamma}.
\end{eqnarray*}
With this notation and taking \eqref{eq: sim GGa} into account, we get the following accept-reject algorithm for simulating from $\IGa(\beta,\gamma,p,\eta)$.\\

\noindent \textbf{Algorithm 1.}\\
\textbf{Step 1.} Independently simulate $U\sim U(0,1)$ and $Y\sim \Ga(\gamma-\beta/p,1)$.\\
\textbf{Step 2.} If $U\le \varphi_1(Y)$ return $Y^{1/p}$, otherwise go back to step 1.\\

On a given iteration, the probability of acceptance is $1/V_1$. Note that, by Proposition \ref{prop: K}, $1/V_1\to1$ as $\eta\downarrow 1$ and $1/V_1\to0$ as $\eta\to\infty$. Thus, this algorithm tends to work better when $\eta$ is close to $1$. As we will see, this is the regime that we are most interested in.

\section{TSOU-Processes}\label{sec: TSOU}

In this section we formally define TSOU-processes and characterize their transition laws. We begin by recalling the definition of a process of Ornstein-Uhlenbeck-type (henceforth OU-process). Let $Z=\{Z_t:t\ge0\}$ be a L\'evy process with $Z_1\sim \ID(A,M,b)_h$ and define a process $Y=\{Y_t:t\ge0\}$ by the stochastic differential equation
$$
\rd Y_t = -\lambda Y_t\rd t + \rd Z_t,
$$
where $\lambda>0$ is a parameter. This has a strong solution of the form
\begin{eqnarray*}
Y_t = e^{-\lambda t}Y_0 + \int_0^t e^{-\lambda(t-s)} \rd Z_s.
\end{eqnarray*}
In this case $Y$ is called an OU-process with parameter $\lambda$ and $Z$ is called the background driving L\'evy process (BDLP). The process $Y$ is a Markov process and so long as
\begin{eqnarray*}
\int_{|x|>2} \log|x| M(\rd x)<\infty
\end{eqnarray*}
it has a limiting distribution. This distribution is necessarily selfdecomposable. Further, every selfdecomposable distribution is the limiting distribution of some OU-process. For details see \cite{Sato:1999} or \cite{Rocha-Arteaga:Sato:2019}.

Theorem 15.10 in \cite{Sato:1999} implies that all $p$-tempered $\alpha$-stable distributions are selfdecomposable and, thus, that each is the limiting distribution of some OU-process. We refer to these as $p$-tempered $\alpha$-stable OU-processes or TSOU-processes. We now characterize the BDLP of a TSOU-process.

\begin{prop}\label{prop: BDLP for TS}
The BDLP of a TSOU-process with parameter $\lambda>0$ and limiting distribution $\ts(R,b)$ is the L\'evy process $\{Z_t:t\ge0\}$ with $Z_1\sim \ID(0,\lambda M, \lambda b)_{h_\alpha}$, where for $B\in\mathfrak B(\mathbb R^d)$
\begin{eqnarray*}
M(B) &=& \int_{\mathbb R^d} \int_0^\infty 1_{B}\left(u x\right)\left(\alpha+p u^{p} \right)  u^{-1-\alpha} e^{-u^{p}} \rd u R(\rd x).
\end{eqnarray*}
\end{prop}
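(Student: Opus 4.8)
The plan is to work directly from the integral representation of the limiting law of an OU-process in terms of its BDLP, and to verify that the claimed $Z_1$ reproduces the cumulant of $\ts(R,b)$. Recall from the standard theory (see \cite{Sato:1999} or \cite{Rocha-Arteaga:Sato:2019}) that if the limiting distribution $\mu$ of an OU-process with parameter $\lambda$ exists, and the BDLP satisfies $Z_1\sim\ID(0,M_Z,b_Z)_{h_\alpha}$, then $\mu$ has cumulant $C_\mu(z)=\int_0^\infty C_{Z_1}(e^{-\lambda s}z)\,\rd s$, this being the cumulant of $\int_0^\infty e^{-\lambda s}\rd Z_s$. Substituting $M_Z=\lambda M$ and $b_Z=\lambda b$ with $M$ as in the statement, the goal is to show that the right-hand side equals the cumulant of $\ts(R,b)$; that is, that the L\'evy-measure part reproduces $L$ as in \eqref{eq: L pTS} and the drift part reproduces $b$.

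For the L\'evy-measure part, I would write $g(y)=e^{i\langle z,y\rangle}-1-i\langle z,y\rangle h_\alpha$ and note that, since $h_\alpha$ is a constant, $g$ is unaffected by the $h$-function under rescaling of $y$, which keeps the bookkeeping clean. Plugging in the form of $M$, interchanging the $s$-integral with the $R$- and $u$-integrals, and substituting $v=e^{-\lambda s}u$ gives $\lambda\int_0^\infty g(e^{-\lambda s}ux)\,\rd s=\int_0^u g(vx)v^{-1}\,\rd v$, so the factor $\lambda$ cancels. Swapping the order of the resulting $u$- and $v$-integrals (over $0<v<u<\infty$) leaves the inner integral $\int_v^\infty(\alpha+pu^p)u^{-1-\alpha}e^{-u^p}\,\rd u$. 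The crux is to recognize that its integrand is exactly $\frac{\rd}{\rd u}\bigl(-u^{-\alpha}e^{-u^p}\bigr)$, so the integral evaluates to $v^{-\alpha}e^{-v^p}$ (the boundary term at $\infty$ vanishes for every $\alpha\in[0,2)$ and $p>0$). What remains is $\int_{\mathbb R^d}\int_0^\infty g(vx)\,v^{-1-\alpha}e^{-v^p}\,\rd v\,R(\rd x)$, which is precisely $\int g\,\rd L$ for $L$ as in \eqref{eq: L pTS}.

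For the drift part, $\int_0^\infty i\langle\lambda b,e^{-\lambda s}z\rangle\,\rd s=i\langle b,z\rangle$, so the shift of $\mu$ relative to $h_\alpha$ is $b$. Combining the two parts yields $C_\mu(z)=C_{\ts(R,b)}(z)$, which identifies the stated $Z_1$ as the BDLP. I expect the main obstacle to be precisely the identification of the antiderivative: spotting that the factor $\alpha+pu^p$ is exactly what makes $(\alpha+pu^p)u^{-1-\alpha}e^{-u^p}$ a total derivative, which is the feature that forces the stated form of $M$. The remaining technical points are the Fubini justification for the interchange of integrals and the convergence of the improper integrals; both should follow from the constraints \eqref{eq: cond for R measure} on $R$ (together with \eqref{eq: alpha 1 cond} when $\alpha=1$), which ensure that $L$, and hence $M$, are genuine L\'evy measures.
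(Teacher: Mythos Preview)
Your proposal is correct and hinges on the same key identity the paper uses, namely $(\alpha+pu^p)u^{-1-\alpha}e^{-u^p}=-\tfrac{\rd}{\rd u}\bigl(u^{-\alpha}e^{-u^p}\bigr)$, but the route is genuinely the reverse of the paper's. The paper invokes Theorem~2.17 of \cite{Rocha-Arteaga:Sato:2019}, which gives the BDLP L\'evy measure directly as the negative radial derivative of the polar density of $L$; it then computes that derivative in the $(\sigma,\bar Q)$ representation and changes variables to arrive at the $R$-form of $M$. You instead start from the claimed $M$ (already in $R$-form), plug it into the stationary cumulant formula $C_\mu(z)=\int_0^\infty C_{Z_1}(e^{-\lambda s}z)\,\rd s$, and verify that $L$ and $b$ come out. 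Your argument is more self-contained, needing only the basic OU cumulant identity rather than the cited theorem; the paper's is shorter because the cited theorem packages the change of variables and Fubini step for you. One small point you leave implicit: your verification identifies the BDLP only because, for fixed $\lambda$, the map from BDLP to stationary law is injective; this is standard, but worth a one-line remark since you are arguing in the ``backward'' direction.
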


For $p=1$ this is given in \cite{Rosinski:2007}.  Some related results are given in \cite{Terdik:Woyczynski:2006}. We now give our main result, which is an explicit representation for the transition function of a TSOU-process that can be used for simulation.

\begin{thrm}\label{thrm: main}
Let $Y=\{Y_t:t\ge0\}$ be a TSOU-process with parameter $\lambda>0$ and limiting distribution $\ts(R,b)$ with $p>0$, $\alpha\in[0,2)$ and $R$ satisfying \eqref{eq: cond for R measure} (or if $\alpha=1$, \eqref{eq: alpha 1 cond}). Assume, in addition, that $0<R(\mathbb R^d)<\infty$ and set $\gamma=1+\lfloor\alpha/p\rfloor$. If $t>0$, then, given $Y_s=y$, we have
\begin{eqnarray}\label{eq: trans RV}
Y_{s+t} \eqd e^{-\lambda t} y + (1-e^{-\lambda t})b - \sum_{n=0}^{\gamma-1}b_n + X_0 + e^{-\lambda t} \sum_{n=1}^{\gamma-1}X_n + \sum_{j=1}^{N} V_j W_j, 
\end{eqnarray}
where $b_0,\dots,b_{\gamma-1}\in\mathbb R^d$ are constants and $N, X_0, X_1,\dots,X_{\gamma-1}$, $V_1, V_2, \dots$, $W_1, W_2, \dots$ are independent random variables with:\\
1. $X_0\sim \ts(R_0,0)$ with $R_0(\rd x) = (1-e^{-\alpha \lambda t})R(\rd x)$,\\
2. if $\gamma\ge2$ then $X_n \sim \mathrm{TS}^p_{\alpha-np}(R_n,0)$ with $R_n(\rd x) = \frac{1}{n!}(1-e^{-p \lambda t})^n R(\rd x)$ for $n=1,2,\dots,(\gamma-1)$,\\
3. $V_1, V_2, \dots\iid R^1$, where $R^1(\rd x)=R(\rd x)/R(\mathbb R^{d})$,\\
4. $W_1, W_2, \dots\iid\IGa(\alpha,\gamma,p,e^{p\lambda t})$,\\
5. $N$ has a Poisson distribution with mean $e^{-\alpha \lambda t}R(\mathbb R^d)K_{\alpha,\gamma,p,e^{p\lambda t}}$,\\
6.
$$
b_0 = \left\{\begin{array}{ll} e^{-\alpha\lambda t} \int_{\mathbb R^{d}} x  R(\rd x) K_{(\alpha-1),\gamma,p,e^{p\lambda t}} 
& \alpha\in[1,2)\\
0 & \alpha\in[0,1)
\end{array}\right.,
$$
and if $\gamma\ge2$ then for $n=1,2,\dots,(\gamma-1)$
$$
b_n =  \left\{\begin{array}{ll} 
e^{-\lambda t} \int_{\mathbb R^{d}}x R_n(\rd x)  p^{-1}\Gamma\left(\frac{1-\alpha+np}{p}\right)   & 1\le \alpha<1+ np\\
0 & \mbox{otherwise}
\end{array}\right..
$$
\end{thrm}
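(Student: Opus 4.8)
The plan is to reduce the transition law to the distribution of a stochastic integral against the BDLP and then to match the two sides at the level of generating triplets. By the Markov property, the strong-solution form of the OU-process, and the stationarity and independence of the increments of the BDLP $Z$ (whose triplet $\ID(0,\lambda M,\lambda b)_{h_\alpha}$ is given in Proposition \ref{prop: BDLP for TS}), I would first argue that, given $Y_s=y$,
\[
Y_{s+t}\eqd e^{-\lambda t}y+\int_0^t e^{-\lambda(t-u)}\,\rd Z_u .
\]
The integral is infinitely divisible, and by the standard formula for integrals of deterministic integrands against a L\'evy process its cumulant equals $\int_0^t C_{Z_1}\!\big(e^{-\lambda(t-u)}z\big)\,\rd u$. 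Because $h_\alpha$ is a \emph{constant} function, the substitution $x'=e^{-\lambda(t-u)}x$ leaves the centering term invariant, so the integral is $\ID(0,M',(1-e^{-\lambda t})b)_{h_\alpha}$, where the shift $(1-e^{-\lambda t})b$ comes from $\lambda b\int_0^t e^{-\lambda(t-u)}\rd u$ and $M'$ is the image of $(\mathrm{Leb}\times\lambda M)$ under $(u,x)\mapsto e^{-\lambda(t-u)}x$.

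Next I would compute $M'$ explicitly. Inserting the form of $M$ from Proposition \ref{prop: BDLP for TS}, substituting $r=e^{-\lambda(t-u)}$ for the time variable and then rescaling the radial variable, the inner $r$-integral reduces to $\int_{e^{-\lambda t}}^{1}\big(\alpha+pv^p r^{-p}\big)r^{\alpha-1}e^{-v^p/r^p}\,\rd r$. The key computational step is to recognize this integrand as the exact derivative $\frac{\rd}{\rd r}\big[r^\alpha e^{-v^p/r^p}\big]$, so the integral telescopes to $e^{-v^p}-e^{-\alpha\lambda t}e^{-v^p e^{p\lambda t}}$. Hence $M'$ has the representation \eqref{eq: L pTS}-type radial density $v^{-1-\alpha}\big(e^{-v^p}-e^{-\alpha\lambda t}e^{-v^p\eta}\big)$ against $R(\rd x)$, where $\eta=e^{p\lambda t}$.

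I would then verify that $M'$ splits as the sum of the L\'evy measures of the independent summands in \eqref{eq: trans RV}. The compound Poisson term $\sum_{j=1}^N V_jW_j$ has L\'evy measure equal to its Poisson mean times the law of $V_1W_1$; using $\psi R^1=e^{-\alpha\lambda t}K_{\alpha,\gamma,p,\eta}R$ and the definition of $f_{\alpha,\gamma,p,\eta}$ this simplifies to radial density $e^{-\alpha\lambda t}v^{-1-\alpha}G_{\gamma,(\eta-1)}(v^p)e^{-v^p}$. Since $\gamma=1+\lfloor\alpha/p\rfloor$ is a positive integer, \eqref{eq:gamma cdf sum} lets me expand $G_{\gamma,(\eta-1)}(v^p)e^{-v^p}=e^{-v^p}-e^{-v^p\eta}\sum_{n=0}^{\gamma-1}\frac{(\eta-1)^n}{n!}v^{pn}$. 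A direct scaling computation gives the radial densities of $X_0$ and of each $e^{-\lambda t}X_n$, and summing all three pieces the $e^{-v^p}$ contributions of $X_0$ and the compound Poisson combine to $v^{-1-\alpha}e^{-v^p}$, while the $X_n$ densities cancel all but the $n=0$ term of the compound Poisson expansion, leaving exactly $-e^{-\alpha\lambda t}v^{-1-\alpha}e^{-v^p\eta}$; thus the three densities sum to that of $M'$.

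Finally I would match the shift. As L\'evy measures add under convolution, it remains to convert every summand's centering to the common $h$-function $h_\alpha$ and check the shifts sum correctly. Here $X_0$ contributes $0$; the compound Poisson (whose natural $h$ is $0$) contributes $h_\alpha\!\int x\,L_{CP}(\rd x)$, which evaluates to $b_0$ via $\int_0^\infty G_{\gamma,(\eta-1)}(w^p)e^{-w^p}w^{-\alpha}\rd w=K_{\alpha-1,\gamma,p,\eta}$; and each $e^{-\lambda t}X_n$, whose index is $\alpha-np$, contributes $e^{-\lambda t}(h_\alpha-h_{\alpha-np})\int x\,L_{X_n}(\rd x)=b_n$, using $\int_0^\infty v^{np-\alpha}e^{-v^p}\rd v=p^{-1}\Gamma\big(\tfrac{1-\alpha+np}{p}\big)$ and $h_\alpha-h_{\alpha-np}=1_{[1\le\alpha<1+np]}$. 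The total shift of the summands is therefore $\sum_{n=0}^{\gamma-1}b_n$, so subtracting it gives shift $0$, and adding $(1-e^{-\lambda t})b$ reproduces the shift of the stochastic integral; the hypotheses $0<R(\mathbb R^d)<\infty$ together with \eqref{eq: cond for R measure} (and \eqref{eq: alpha 1 cond} when $\alpha=1$) ensure $R^1$ is a probability measure and that $\int x\,R(\rd x)$ and all the constants from Proposition \ref{prop: K} are finite. The main obstacle is precisely this last bookkeeping of shifts across the three distinct centering conventions ($h_\alpha$ for $X_0$, $h_{\alpha-np}$ for $X_n$, and $h\equiv0$ for the compound Poisson); this is the delicate point when $\gamma\ge2$, i.e. when $\alpha\ge p$, and is what generates the correction constants $b_n$. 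The conceptual crux underlying it is choosing the decomposition so that the otherwise non-integrable small-jump behavior for $\alpha\ge p$ is absorbed into genuine lower-index tempered stable pieces plus a finite-activity compound Poisson, which is exactly what the telescoping radial integral and the incomplete-gamma expansion make possible.
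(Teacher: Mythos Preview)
Your proposal is correct and follows essentially the same route as the paper. The paper packages the computation into Lemma~\ref{lemma:second int}, working at the level of characteristic functions (cumulants) rather than directly with the L\'evy triplet, but the key computational steps---the derivative identity \eqref{eq: deriv} giving the telescoped radial density $v^{-1-\alpha}\big(e^{-v^p}-e^{-\alpha\lambda t}e^{-v^p e^{p\lambda t}}\big)$, the expansion of the incomplete-gamma factor via \eqref{eq:gamma cdf sum}, and the centering corrections that produce the $b_n$---are the same as yours; your treatment of the shift bookkeeping across the three $h$-conventions is, if anything, more explicit than the paper's.
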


We note that, in the case $0<\alpha<p$, a version of this result is contained in Theorem 2 of \cite{Grabchak:2020}. However, in that paper, the distribution of the product $V_jW_j$ is presented in a less intuitive way.  In situations where it is easy to simulate from $R^1$, the representation given in Theorem \ref{thrm: main} leads to a methodology for simulation that is simpler than the one suggested by the results in \cite{Grabchak:2020}.

\begin{remark}\label{remark: cts}
CTS distributions are one dimension distributions of the form $\ts(R,b)$, where  $p=1$ and $R(\rd x)= a\eta^{\alpha}\delta_{1/\eta}(\rd x)$ for some $a,\eta>0$. When $\alpha=0$, which corresponds to the class of gamma distributions, a version of Theorem \ref{thrm: main} can be found in \cite{Qu:Dassios:Zhao:2019} and when $\alpha=.5$, which  corresponds to the class of inverse Gaussian distributions, it can be found in \cite{Zhang:Zhang:2008}. More generally, for $\alpha\in(0,1)$ it can be found in \cite{Zhang:Zhang:2009}, and for $\alpha\in(1,2)$ it can be found in \cite{Kawai:Masuda:2012}. RDTS distributions are extensions of CTS distributions to the case where $p=2$. In this case the result can be found in \cite{Bianchi:Rachev:Fabozzi:2017}. The result in \cite{Kawai:Masuda:2012} is the only case with $\alpha\ge p$ that we have seen in the literature.
\end{remark}

\begin{remark}\label{remark: alpha 1}
A version of Theorem \ref{thrm: main} can be obtained when $\alpha=1$ and \eqref{eq: cond for R measure} holds, but \eqref{eq: alpha 1 cond} does not. In this case we use the $h$-function given by $h(x)=1_{[|x|\le1]}$. Let $R$ be a Rosi\'nski measure with $0<\int_{\mathbb R^d}\left(|x|\vee1\right)R(\rd x)<\infty$ and let $L$ be the L\'evy measure given by \eqref{eq: L pTS} with $\alpha=1$ and some $p>0$. Arguments similar to those in the proof of Proposition \ref{prop: BDLP for TS} imply that a TSOU-process with parameter $\lambda>0$ and limiting distribution $\ID(0,L,b)_h$ has BDLP $Z=\{Z_t:t\ge0\}$ with $Z_1\sim \ID(0,\lambda M, \lambda c)_h$, where $M$ is as in Proposition \ref{prop: BDLP for TS} and 
$$
c = b - \int_{\mathbb R^d}\frac{x}{|x|} \int_{|x|^{-1}}^\infty (1+pu^p)u^{-2}e^{-u^p}\rd u R(\rd x) = b - \int_{\mathbb R^d} x e^{-|x|^{-p}} R(\rd x),
$$
where the last equality follows from \eqref{eq: deriv} below. In this case, \eqref{eq: trans RV} holds, but with $X_0\sim \ID(0,(1-e^{-\lambda t})L,0)_h$, $b_n=0$ for $n>0$, and
\begin{eqnarray*}
b_0  &=& \left(1-e^{-\lambda t}\right) \int_{\mathbb R^d} x e^{-|x|^{-p}} R(\rd x)\\
&&\quad + \int_{\mathbb R^d} x \int_{|x|^{-1}}^{|x|^{-1}e^{\lambda t}}\left(\frac{1}{|x|u}-e^{-\lambda t}\right)(1+pu^p) u^{-1}e^{-u^p}\rd u R(\rd x) \\
&&\quad+e^{-\lambda t} \int_{\mathbb R^d} x \int_0^{|x|^{-1}}\left(e^{-u^p}-e^{-u^p e^{\lambda t p}}\right) u^{-1}\rd u R(\rd x).
\end{eqnarray*}
The proof is similar to that of Theorem \ref{thrm: main}, but with additional care.
\end{remark}

\section{Simulation of TSOU-Processes}\label{sec: sim TS}

Theorem \ref{thrm: main} gives a simple recipe for simulating an increment from a TSOU-process. Its main ingredients are the ability to simulate from a Poisson distribution, an incomplete gamma distribution, the $\mathrm{TS}^p_{\alpha-np}(R_n,0)$ distributions, and distribution $R^1$. Approaches for simulating from a Poisson distribution are well known and an accept-reject algorithm for simulating from the incomplete gamma distribution is given in Section \ref{sec: IGa} above. To simulate from $\mathrm{TS}^p_{\alpha-np}(R_n,0)$ we can use the inverse transform method. Alternatively there are shot noise representations given in \cite{Rosinski:2007} and \cite{Rosinski:Sinclair:2010}. When $n=\gamma-1$ we have $\alpha-np<p$ and we can use the rejection sampling technique developed in \cite{Grabchak:2019}. Approaches for simulating from $R^1$ cannot be easily described since $R^1$ can be, essentially, any probability measure on $\mathbb R^d$. In Section \ref{sec: sim study} we will give a useful example, where simulation from $R^1$ is straightforward. On the other hand, when simulation from $R^1$ is complicated, we can use a modification of the approach given in \cite{Grabchak:2020} for the case $\alpha<p$. We now extend that approach to the case where we allow for any $\alpha$, including $\alpha\ge p$.

The idea is that, sometimes, instead of simulating from $R^1$, it is easier to simulate directly from the distribution of the product $V_jW_j$, where $V_j\sim R^1$ and $W_j\sim\IGa(\alpha,\gamma,p,e^{p\lambda t})$. To do this, it is often easier to work with the family of probability measures $\bar Q$ instead of the Rosi\'nski measure $R$. We begin by defining, for $n=0,1,2,\dots$,
$$
\ell_n(\xi,u) := \frac{1}{n!}(e^{p\lambda t}-1)^n\int_{(0,\infty)} e^{-u^ps} s^n Q_\xi(\rd s), \ \ \xi\in\mathbb S^{d-1}, u>0.
$$
Note that $\ell_0(\xi,u)=q(\xi,u^p)$, where $q(\xi,u)$ is as in \eqref{eq: q to p temp}. Next note that the distribution of the product $V_nW_n$ satisfies, for $B\in\mathfrak B(\mathbb R^d)$,
\begin{eqnarray*}
H(B) &=&  \int_{\mathbb R^d}\int_{0}^\infty 1_{B}(ux)f_{\alpha,\gamma,p,e^{p\lambda t}}(u) \rd u R^1(\rd x)\\
&=&\frac{1}{K} \int_{\mathbb S^{d-1}}\int_0^\infty 1_{B}(u\xi) \left( \ell_0(\xi,u) -  \sum_{n=0}^{\gamma-1}\ell_n(\xi,ue^{\lambda t})u^{np} \right)  u^{-1-\alpha} \rd u  \sigma(\rd \xi),
\end{eqnarray*}
where, for simplicity, we write 
$$
K= K_{\alpha,\gamma,p,e^{p\lambda t}}R(\mathbb R^d) = K_{\alpha,\gamma,p,e^{p\lambda t}}\int_{\mathbb R^d}\int_{(0,\infty)}s^{\alpha/p}Q_\xi(\rd s)\sigma(\rd \xi).
$$
Next, we introduce, the quantities
$$
\kappa_\xi = \frac{1}{ \int_0^\infty\left( \ell_0(\xi,u) -  \sum_{n=0}^{\gamma-1}\ell_n(\xi,ue^{\lambda t})u^{np} \right)  u^{-1-\alpha} \rd u}, \ \ \xi\in\mathbb S^{d-1},
$$
the Borel measure on $\mathbb S^{d-1}$
$$
\sigma_1(\rd \xi) = \frac{1}{\kappa_\xi K}\sigma(\rd \xi),
$$
and the family of Borel measures on $(0,\infty)$ 
$$
F_\xi(\rd u) = f_\xi(u) \rd u, \ \ \xi\in\mathbb S^{d-1},
$$
where
$$
f_\xi(u)= \kappa_\xi \left(  \ell_0(\xi,u) -  \sum_{n=0}^{\gamma-1}\ell_n(\xi,ue^{\lambda t})u^{np}  \right)  u^{-1-\alpha}  , \ \ u>0.
$$
It is not difficult to check that $\sigma_1$ is a probability measure on $\mathbb S^{d-1}$, that $F_\xi$ is a probability measure on $(0,\infty)$ for each $\xi\in\mathbb S^{d-1}$, and that
$$
H(\rd \xi,\rd u)= F_\xi(\rd u)  \sigma_1(\rd \xi), \ \ u>0,\ \xi\in\mathbb S^{d-1}.
$$
Thus, we can simulate $X$ from $H$ by first simulating $\xi$ from $\sigma_1$, then simulating $X_\xi$ from $ F_\xi$, and finally taking  
$$
X=\xi X_\xi.
$$
Note that, even though $X_\xi$ is real-valued, the fact that $\xi\in\mathbb S^{d-1}$ insures that $X$ is $\mathbb R^d$-valued. We now have $X\eqd V_jW_j$. It remains to describe approaches for simulating from $\sigma_1$ and $F_\xi$.
 
Simulation from $\sigma_1$ is straightforward when  $\sigma_1$ is a finite measure, as in this case the problem reduces to simulating from a multinomial distribution. This always holds in the important case where the dimension $d=1$. For the simulation of other distributions on the unit sphere, see the monograph \cite{Johnson:1987}. In particular, there has been much work  focused on the case of a uniform distribution, see, e.g.\ \cite{Tashiro:1977} and the references therein. While no method works in general, one can often set up an approximate simulation method by first approximating $\sigma_1$ by a distribution with a finite support, see Lemma 1 in \cite{Byczkowski:Nolan:Rajput:1993}.

We now turn to the problem of simulation from $F_\xi$ for a fixed $\xi\in\mathbb S^{d-1}$. Toward this end we introduce the quantity
$$
C_{\xi,\gamma} = \frac{(e^{\lambda t p}-1)^\gamma}{\gamma!} \int_{(0,\infty)} s^\gamma Q_\xi(\rd s).
$$
For the remainder of this section we assume that this quantity is finite. By Lemma 7.1 in \cite{Grabchak:2020}, $C_{\xi,\gamma}$ is finite for $\sigma$-a.e.\ $\xi$ if and only if 
\begin{eqnarray*}
\int_{\mathbb R^d} |x|^{\alpha-\gamma p} R(\rd x) <\infty.
\end{eqnarray*}
We next introduce a distribution with pdf
\begin{eqnarray*}
g(u) &=&\alpha(1-\alpha/p) \left(u^{p-\alpha-1}1_{[0<u\le 1]} + u^{-1-\alpha}1_{[u>1]}\right)\\
&=& \frac{\alpha}{p}(p-\alpha)u^{p-\alpha-1}1_{[0<u\le1]} + \left(1-\frac{\alpha}{p}\right)\alpha  u^{-1-\alpha}1_{[u>1]},
\end{eqnarray*}
where $p>\alpha>0$ are parameters. This a type of log-Laplace distribution, see e.g.\ \cite{Kozubowski:Podgorski:2003} and the references therein. It is a mixture of a beta distribution and a Pareto distribution and we will denote it by $\LL(\alpha,p)$. It is readily checked that, if $U_1,U_2\iid U(0,1)$ and 
$$
Y = U_1^{1/(p-\alpha)}U_2^{-1/\alpha},
$$
then $Y\sim\LL(\alpha,p)$. Alternatively, we can use just one random variable $U_1\sim U(0,1)$ and take
$$
Y = \left(\frac{U_1}{\alpha/p}\right)^{1/(p-\alpha)}1_{[U_1\le\alpha/p]} + \left(\frac{1-U_1}{1-\alpha/p}\right)^{-1/\alpha}1_{[U_1>\alpha/p]}. 
$$

\begin{prop}\label{prop:for AR}
We have
$$
f_\xi(u)\le V_2 g_2(u), \ \ u>0,
$$
where $g_2$ is the pdf of the $\LL(\alpha,\gamma p)$ distribution,
$$
V_2 = \kappa_\xi \frac{\gamma p}{\alpha(\gamma p-\alpha)} V_2', 
$$
and 
$$
V_2' = \max\left\{
\min\left\{1,  e^{-\gamma}\gamma^\gamma\frac{(e^{p\lambda t}-1)^{\gamma}}{\gamma!} \right\},C_{\xi,\gamma}
\right\}.
$$
\end{prop}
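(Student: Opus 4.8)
The plan is to start from the integral representation of $f_\xi$ in terms of $Q_\xi$ and to recognize the bracketed term as a gamma cdf. Substituting the definitions of the $\ell_n$ and using $(ue^{\lambda t})^p=u^pe^{p\lambda t}$, the term in parentheses defining $f_\xi$ becomes
$$
\int_{(0,\infty)}\left[e^{-u^p s}-e^{-u^p e^{p\lambda t}s}\sum_{n=0}^{\gamma-1}\frac{\left((e^{p\lambda t}-1)u^p s\right)^n}{n!}\right]Q_\xi(\rd s).
$$
Since $\gamma=1+\lfloor\alpha/p\rfloor$ is a positive integer, \eqref{eq:gamma cdf sum} with $\zeta=e^{p\lambda t}-1$ identifies the integrand, after factoring out $e^{-u^p s}$, as $e^{-u^p s}G_{\gamma,e^{p\lambda t}-1}(u^p s)$. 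Hence
$$
f_\xi(u)=\kappa_\xi u^{-1-\alpha}\int_{(0,\infty)}e^{-u^p s}G_{\gamma,e^{p\lambda t}-1}(u^p s)\,Q_\xi(\rd s),\quad u>0.
$$

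Next I would write out $g_2$, the pdf of $\LL(\alpha,\gamma p)$, whose two coefficients both equal $\alpha(\gamma p-\alpha)/(\gamma p)$; since $V_2=\kappa_\xi\frac{\gamma p}{\alpha(\gamma p-\alpha)}V_2'$, this factor cancels and $V_2 g_2(u)=\kappa_\xi V_2'\bigl(u^{\gamma p-\alpha-1}1_{[0<u\le1]}+u^{-1-\alpha}1_{[u>1]}\bigr)$. After cancelling $\kappa_\xi$ and the appropriate power of $u$ on each region, the claim $f_\xi(u)\le V_2 g_2(u)$ reduces to the two pointwise inequalities
$$
\int_{(0,\infty)}e^{-u^p s}G_{\gamma,e^{p\lambda t}-1}(u^p s)\,Q_\xi(\rd s)\le V_2'\quad(u>1),
$$
$$
u^{-\gamma p}\int_{(0,\infty)}e^{-u^p s}G_{\gamma,e^{p\lambda t}-1}(u^p s)\,Q_\xi(\rd s)\le V_2'\quad(0<u\le1).
$$

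The key estimate is the elementary bound $G_{\gamma,\zeta}(x)\le(\zeta x)^\gamma/\gamma!$, obtained by replacing $e^{-t}$ by $1$ in the defining integral $\frac{1}{\Gamma(\gamma)}\int_0^{x\zeta}t^{\gamma-1}e^{-t}\rd t$ and using $\Gamma(\gamma+1)=\gamma!$. Combined with $e^{-x}\le1$ and $0\le G_{\gamma,\zeta}\le1$, this yields two bounds on the integrand $e^{-x}G_{\gamma,e^{p\lambda t}-1}(x)$: trivially it is at most $1$, and it is at most $\frac{(e^{p\lambda t}-1)^\gamma}{\gamma!}x^\gamma e^{-x}\le e^{-\gamma}\gamma^\gamma\frac{(e^{p\lambda t}-1)^\gamma}{\gamma!}$, using $\sup_{x>0}x^\gamma e^{-x}=\gamma^\gamma e^{-\gamma}$. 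For $u>1$, integrating the minimum of these two bounds against the probability measure $Q_\xi$ gives the first inequality with right-hand side $\min\{1,e^{-\gamma}\gamma^\gamma(e^{p\lambda t}-1)^\gamma/\gamma!\}\le V_2'$. For $0<u\le1$, applying $e^{-x}G_{\gamma,e^{p\lambda t}-1}(x)\le\frac{(e^{p\lambda t}-1)^\gamma}{\gamma!}x^\gamma$ with $x=u^p s$ and integrating produces exactly $C_{\xi,\gamma}\le V_2'$ once $u^{-\gamma p}$ cancels $u^{\gamma p}$. Because $V_2'$ dominates both region-specific bounds by its definition as a maximum, the single envelope inequality holds for all $u>0$.

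The computation is essentially routine; the only genuinely substantive steps are recognizing the bracket as $e^{-u^p s}G_{\gamma,e^{p\lambda t}-1}(u^p s)$ via \eqref{eq:gamma cdf sum} and then invoking the sharp bounds $G_{\gamma,\zeta}(x)\le(\zeta x)^\gamma/\gamma!$ and $\sup_{x>0}x^\gamma e^{-x}=\gamma^\gamma e^{-\gamma}$. I do not anticipate any real obstacle beyond the bookkeeping: I must keep careful track of which power of $u$ appears in $g_2$ on each of the two regions and confirm that the two region-specific bounds assemble precisely into the constant $V_2'$.
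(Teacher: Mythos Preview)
Your proposal is correct and follows essentially the same approach as the paper. The paper invokes Lemma~\ref{lemma: exp approx} rather than \eqref{eq:gamma cdf sum} to obtain the same integral representation, then records the three global bounds $f_\xi(u)\le\kappa_\xi C_{\xi,\gamma}u^{p\gamma-\alpha-1}$, $f_\xi(u)\le\kappa_\xi e^{-\gamma}\gamma^\gamma\frac{(e^{p\lambda t}-1)^\gamma}{\gamma!}u^{-1-\alpha}$, and $f_\xi(u)\le\kappa_\xi u^{-1-\alpha}$ before combining them; your region-by-region argument and the paper's global-bounds argument use the same three estimates and differ only in presentation.
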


Let $\varphi_2(\xi,u) = f_\xi(u)/(V_2g_2(u))$ and note that
\begin{eqnarray*}
\varphi_2(\xi,u) &=& \frac{ \ell_0(\xi,u) -  \sum_{n=0}^{\gamma-1}\ell_n(\xi,ue^{\lambda t})u^{np} }{\left(u^{\gamma p}1_{0\le u\le1}+ 1_{u>1}\right) V_2'}\\
&=&\frac{\int_{(0,\infty)}\left(e^{-u^ps} - e^{-su^pe^{p\lambda t}}\sum_{n=0}^{\gamma-1}\frac{(e^{p\lambda t}-1)^n}{n!} s^nu^{np}\right)Q_\xi(\rd s) }{\left(u^{\gamma p}1_{[0\le u\le1]}+ 1_{[u>1]}\right) V_2'}.
\end{eqnarray*}
With this notation we get the following accept-reject algorithm for simulating from $F_\xi$ for a fixed $\xi$.\\

\noindent \textbf{Algorithm 2.}\\
\textbf{Step 1.} Independently simulate $U\sim U(0,1)$ and $Y\sim \LL(\alpha,\gamma p)$.\\
\textbf{Step 2.} If $U\le \varphi_2(Y)$ return $Y$, otherwise go back to step 1.\\

On a given iteration of Algorithm 2, the probability of acceptance is $1/V_2$. We are most interested in the case when $t$ is small. To better understand the behavior of $V_2$ for such $t$ we first note that, by Lemma \ref{lemma: exp approx} given in Section \ref{sec: proofs} below,
\begin{eqnarray*}
\kappa_\xi &\le& \frac{\gamma!}{(e^{p\lambda t}-1)^\gamma \int_0^\infty \int_{(0,\infty)} e^{-u^ps e^{\lambda t p}}s^\gamma Q_\xi(\rd s)u^{\gamma p-\alpha-1}\rd u}\\
&=& \frac{\gamma! p}{(e^{p\lambda t}-1)^\gamma \Gamma(\gamma-\alpha/p)e^{-\lambda t(p\gamma-\alpha)} \int_{(0,\infty)} s^{\alpha/p} Q_\xi(\rd s)}.
\end{eqnarray*}
From here it follows that
$$
\limsup_{t\to0} V_2 \le  \frac{\max\left\{e^{-\gamma}\gamma^\gamma,\int_{(0,\infty)} s^\gamma Q_\xi(\rd s)\right\}}{\Gamma(\gamma-\alpha/p)\int_{(0,\infty)} s^{\alpha/p} Q_\xi(\rd s)}\frac{\gamma p^2}{\alpha(\gamma p-\alpha)} .
$$
Thus the probability of acceptance is bounded away from $0$ when $t$ is small.

In some cases we can improve on Algorithm 2. An issue with the log-Laplace distribution is that it has heavy tails, which can lead to many rejections when the tails of $f_\xi$ are lighter. When the support of $Q_\xi$ is lower bounded, we can replace the log-Laplace distribution with a generalized gamma distribution, which has lighter tails. The method is based on the following result.

\begin{prop}\label{prop: alt bound}
Let $\zeta= \sup\{c>0: Q_\xi((0,c))=0\}$. If $\zeta>0$, then
$$
f_\xi(u)\le V_3 g_3(u), \ \ u>0,
$$
where $g_3$ is the pdf of the $\GGa(p\gamma-\alpha,p,\zeta)$ distribution and
\begin{eqnarray*}
V_3 &=& \kappa_\xi  \zeta^{\alpha/p-\gamma} \frac{\Gamma(\gamma-\alpha/p)}{p} C_{\xi,\gamma}.
\end{eqnarray*}
\end{prop}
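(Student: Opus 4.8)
The plan is to reduce the claimed bound to two one-line pointwise inequalities after first rewriting $f_\xi$ in closed form. Since $\gamma=1+\lfloor\alpha/p\rfloor$ is a positive integer, I may invoke the integer-$\gamma$ identity \eqref{eq:gamma cdf sum}, which (in the variable $v$) reads $G_{\gamma,(\eta-1)}(v)e^{-v}=e^{-v}-e^{-v\eta}\sum_{n=0}^{\gamma-1}\frac{(\eta-1)^n}{n!}v^n$. Writing $\eta=e^{p\lambda t}$ and substituting $v=su^p$ (so that $(su^p)^n=s^nu^{np}$), the numerator already displayed in the expression for $\varphi_2$ above becomes $\int_{(0,\infty)}G_{\gamma,(\eta-1)}(su^p)e^{-su^p}Q_\xi(\rd s)$. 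Hence
$$
f_\xi(u)=\kappa_\xi\, u^{-1-\alpha}\int_{(0,\infty)} G_{\gamma,(\eta-1)}(su^p)\,e^{-su^p}\,Q_\xi(\rd s),\ \ u>0.
$$

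Next I would use the hypothesis $\zeta=\sup\{c>0:Q_\xi((0,c))=0\}>0$, which forces $Q_\xi$ to be supported on $[\zeta,\infty)$; thus $s\ge\zeta$ for $Q_\xi$-a.e.\ $s$, and consequently $e^{-su^p}\le e^{-\zeta u^p}$ for every $u>0$. The remaining factor is handled by bounding the incomplete gamma cdf: with $z=(\eta-1)su^p$ and using $e^{-x}\le1$,
$$
G_{\gamma,(\eta-1)}(su^p)=\frac{1}{\Gamma(\gamma)}\int_0^{z} x^{\gamma-1}e^{-x}\rd x\le\frac{1}{\Gamma(\gamma)}\int_0^z x^{\gamma-1}\rd x=\frac{z^\gamma}{\gamma!}=\frac{(\eta-1)^\gamma}{\gamma!}s^\gamma u^{p\gamma},
$$
where the final equality uses that $\gamma$ is a positive integer so $\Gamma(\gamma+1)=\gamma!$.

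Combining the two bounds gives, for every $u>0$ and $Q_\xi$-a.e.\ $s$, the estimate $G_{\gamma,(\eta-1)}(su^p)e^{-su^p}\le\frac{(\eta-1)^\gamma}{\gamma!}s^\gamma u^{p\gamma}e^{-\zeta u^p}$. Integrating against $Q_\xi$ and multiplying by $\kappa_\xi u^{-1-\alpha}$ yields
$$
f_\xi(u)\le\kappa_\xi\frac{(\eta-1)^\gamma}{\gamma!}\left(\int_{(0,\infty)}s^\gamma Q_\xi(\rd s)\right)u^{p\gamma-\alpha-1}e^{-\zeta u^p}=\kappa_\xi\, C_{\xi,\gamma}\, u^{p\gamma-\alpha-1}e^{-\zeta u^p}.
$$
Because $p\gamma>\alpha$, the function $u^{p\gamma-\alpha-1}e^{-\zeta u^p}$ equals $\frac{\Gamma(\gamma-\alpha/p)}{p}\zeta^{\alpha/p-\gamma}$ times the $\GGa(p\gamma-\alpha,p,\zeta)$ density $g_3$, and reading off this normalizing constant delivers exactly $V_3=\kappa_\xi\zeta^{\alpha/p-\gamma}\frac{\Gamma(\gamma-\alpha/p)}{p}C_{\xi,\gamma}$, which finishes the proof. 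The computation is essentially routine; the only place needing any care is the first step, namely correctly applying the integer-$\gamma$ identity \eqref{eq:gamma cdf sum} under the substitution $v=su^p$ to recognize the bracket as $\int G_{\gamma,(\eta-1)}(su^p)e^{-su^p}Q_\xi(\rd s)$. Everything thereafter collapses to the elementary bounds $e^{-su^p}\le e^{-\zeta u^p}$ and $e^{-x}\le1$, together with the finiteness of $C_{\xi,\gamma}$ assumed for the section.
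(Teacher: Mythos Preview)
Your proof is correct and follows essentially the same approach as the paper's: both bound the bracket $e^{-su^p}-e^{-su^pe^{p\lambda t}}\sum_{n=0}^{\gamma-1}\frac{(e^{p\lambda t}-1)^n}{n!}s^nu^{np}$ above by $\frac{(e^{p\lambda t}-1)^\gamma}{\gamma!}s^\gamma u^{p\gamma}e^{-su^p}$ and then use the support condition $s\ge\zeta$ to replace $e^{-su^p}$ by $e^{-\zeta u^p}$. The only cosmetic difference is that the paper applies Lemma~\ref{lemma: exp approx} directly to obtain the factor $e^{-su^p}$ in one step, whereas you first recast the bracket as $G_{\gamma,(\eta-1)}(su^p)e^{-su^p}$ and then bound the two factors separately (using $e^{-x}\le1$ inside the incomplete gamma integral and $e^{-su^p}\le e^{-\zeta u^p}$ outside); the resulting inequality and the identification of $V_3$ are identical.
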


Let $\varphi_3(u) = f_\xi(u^{1/p})/(V_3g_3(u^{1/p}))$ and note that
\begin{eqnarray*}
\varphi_3(u) &=& \frac{ \ell_0(\xi,u^{1/p}) -  \sum_{n=0}^{\gamma-1}\ell_n(\xi,u^{1/p}e^{\lambda t})u^{n} }{u^{\gamma}e^{-u\zeta} C_{\xi,\gamma}} \\
&=& \frac{ \int_{(0,\infty)}\left(e^{-us} - e^{-sue^{p\lambda t}}\sum_{n=0}^{\gamma-1}\frac{(e^{p\lambda t}-1)^n}{n!} s^nu^{n}\right)Q_\xi(\rd s) }{u^{\gamma}e^{-u\zeta} C_{\xi,\gamma}} .
\end{eqnarray*}
Combining this with \eqref{eq: sim GGa} leads to the following accept-reject algorithm for simulating from $F_\xi$ for a fixed $\xi$. \\

\noindent \textbf{Algorithm 3.}\\
\textbf{Step 1.} Independently simulate $U\sim U(0,1)$ and $Y\sim \Ga(\gamma-\alpha/p,\zeta)$.\\
\textbf{Step 2.} If $U\le \varphi_3(Y)$ return $Y^{1/p}$, otherwise go back to step 1.\\

It is not difficult to check that
$$
\frac{V_2}{V_3}\ge \frac{\gamma p}{\alpha\Gamma(\gamma-\alpha/p+1)} \zeta^{\gamma-\alpha/p}.
$$
It follows that, $V_2>V_3$ whenever $\zeta>\left(\frac{\alpha}{\gamma p} \Gamma(\gamma-\alpha/p+1)\right)^{1/(\gamma-\alpha/p)}$. In this case, Algorithm 3 will accept with a higher probability than Algorithm 2.\\

\noindent\textbf{Example.} A version of Algorithm 3 was derived in \cite{Kawai:Masuda:2012} for the case of CTS limiting distributions with $\alpha\in(1,2)$. Here $p=1$ and $R(\rd x)= a\zeta^{\alpha}\delta_{1/\zeta}(\rd x)$ for some $a,\zeta>0$. This corresponds to $\sigma(\rd \xi) =a \delta_{1}(\rd\xi)$ and
$$
q_1(u) = e^{-\zeta u} = \int_{(0,\infty)} e^{-us}Q_1(\rd s),
$$
where $Q_1(\rd r) = \delta_\zeta(\rd s)$. It follows that $\gamma=2$ and  
\begin{eqnarray*}
\varphi_3(u) = \frac{2}{(e^{\lambda t}-1)^2 \zeta^2}\frac{e^{-u\zeta}-e^{-ue^{\lambda t}\zeta}-e^{-ue^{\lambda t}\zeta}\zeta u(e^{\lambda t}-1)}{u^2 e^{-u\zeta}}.
\end{eqnarray*}
In this case, our Algorithm 3 reduces to Algorithm 2 in \cite{Kawai:Masuda:2012}. We note that there appears to be a typo in that paper. The formula for what they call $v_{2,\Delta}$ should be as given by $\varphi_3$.

\section{Simulation Study}\label{sec: sim study}

In this section we perform a small-scale simulation study to see how well our methodology works in practice. We focus on a family of one-dimensional $p$-tempered $\alpha$-stable distributions for which the transition law had not been previously derived in the case $\alpha\ge p$. This is the family of power tempered stable distributions, which correspond to the case where $p=1$,
$$
R(\rd x) = .5 c(\alpha+\ell)(\alpha+\ell+1)(1+|x|)^{-2-\alpha-\ell} \rd x,
$$
and $c,\ell>0$ are parameters. When $R$ is of this form, we denote the distribution $\mathrm{TS}^1_\alpha(R,0)$ by $\mathrm{PT}_\alpha(\ell,c)$. These models have a finite mean, but still fairly heavy tails. In fact, if $Y\sim\mathrm{PT}_\alpha(\ell,c)$, then, for $\beta\ge0$,
$$
\rE|Y|^\beta<\infty \mbox{ if and only if } \beta<1+\alpha+\ell.
$$
Thus, $\ell$ controls how heavy the tails of the distribution are. 

Power tempered stable distributions were introduced in \cite{Grabchak:2016book} and then further studied in \cite{Grabchak:2019} and \cite{Grabchak:2020}. However, we use a sightly different parametrization because the one considered in \cite{Grabchak:2019} and \cite{Grabchak:2020} is not continuous at $\alpha=1$. Methods to numerically evaluate the pdfs and related quantities of these distributions are available in the SymTS package \cite{Grabchak:Cao:2017} for the statistical software R. This package also allows for the simulation of random variables from this distribution using the inverse transform method.  For $\alpha\in(0,1)$ the transition laws for the corresponding TSOU-processes were studied in \cite{Grabchak:2020}. However, the case with $\alpha\in[1,2)$ has not been studied before. 

We want to simulate a TSOU-process with parameter $\lambda>0$ and limiting distribution $\mathrm{PT}_\alpha(\ell,c)$, with $\alpha\ge1$, on a discrete grid. For simplicity, we assume that the points are evenly spaced and thus that we want to simulate the observations 
$$
Y_0,Y_t,Y_{2t},\dots, Y_{nt}
$$
for some $t>0$. It is readily checked that $R(\mathbb R^d)=c(\alpha+\ell)<\infty$ and thus that we can use Theorem \ref{thrm: main}. When $\alpha\in[1,2)$, we have $\gamma=2$. Note that, by symmetry, $\int_{\mathbb R}x R(\rd x)=0$ and thus that $b_n=0$ for $n=0,1$. It follows that, if we have simulated $Y_{t(k-1)}=y$, then we can take 
$$
Y_{kt}=  e^{-\lambda t}y+ X_0+ e^{-\lambda t}X_1+ \sum_{j=1}^N V_jW_j,
$$
where $X_0\sim \mathrm{PT}_\alpha(\ell,(1-e^{-\alpha\lambda t})c)$, $X_1\sim \mathrm{PT}_{\alpha-1}(\ell+1,(1-e^{-\lambda t})c)$, $V_1,V_2,\dots\iid R^1$, $W_1,W_2,\dots\iid \mathrm{IGa}(\alpha,2,1,e^{\lambda t})$, and $N\sim \mathrm{Poisson}(\psi)$ are independent random variables. Here, by Proposition \ref{prop: K},
\begin{eqnarray*}
\psi &=& (\alpha+\ell)ce^{-\alpha\lambda t} K_{\alpha,2,1,e^{\lambda t}} \\
 &=& \left\{\begin{array}{ll} 
(\alpha+\ell)c\frac{\Gamma(2-\alpha)}{\alpha(\alpha-1)}\left(e^{-\alpha\lambda t}-1 +\alpha\left(1-e^{-\lambda t}\right)\right) & \alpha\in(1,2)\\
(1+\ell)ce^{-\lambda t}\left(e^{\lambda t}-1 - \lambda t \right) & \alpha=1
\end{array}
\right..
\end{eqnarray*}
To simulate from $R^1$, note that
$$
R^{1}(\rd x) = .5(\alpha+\ell+1)(1+|x|)^{-2-\alpha-\ell} \rd x,
$$
which is a variant of the Pareto distribution. It is not difficult to check that if $U_1\sim U(-1,1)$ and \begin{eqnarray}\label{eq: sim V}
V = \frac{U_1}{|U_1|} \left(|U_1|^{-1/(1+\alpha+\ell)} -1\right),
\end{eqnarray}
then $V\sim R^1$. 

\begin{figure}\label{fig: TSOU}
\linespread{.75}
\begin{tabular}{cc}
\includegraphics[trim={1.15cm 1cm 1cm .5cm},clip,scale=.42]{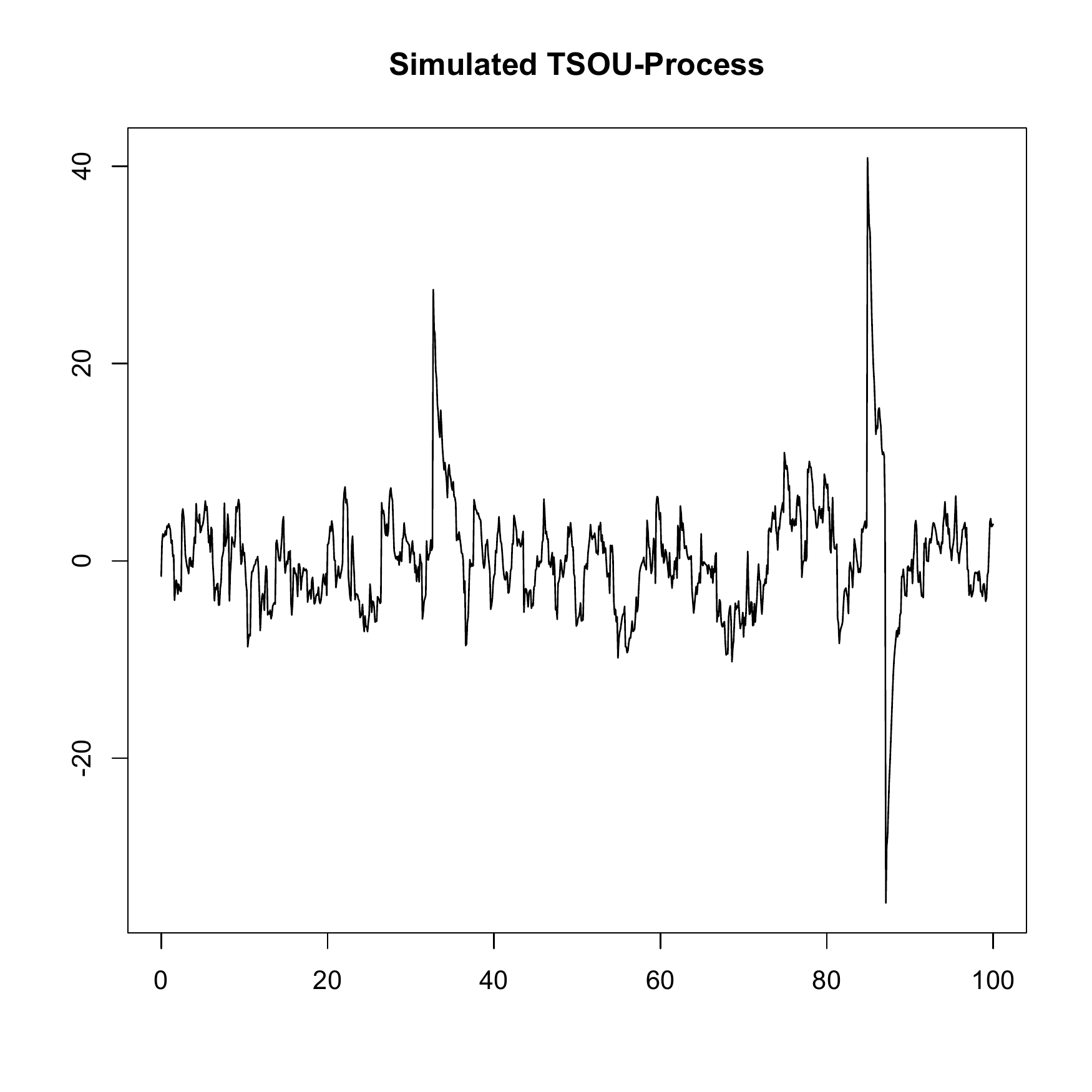} & \includegraphics[trim={1.15cm 1cm 1cm .5cm},clip,scale=.42]{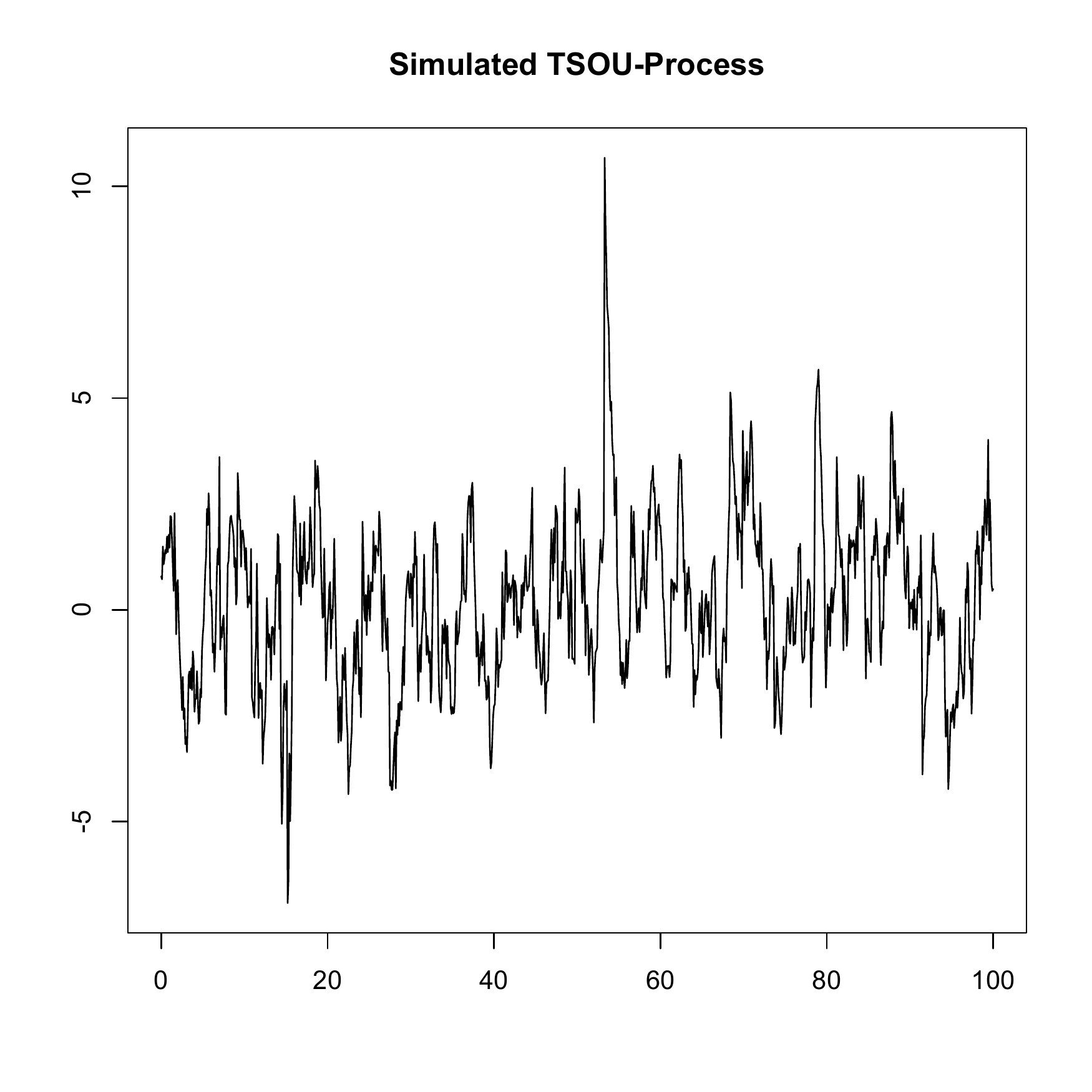} \\
$\alpha=1,\ell=1$ & $\alpha=1,\ell=5$ \vspace{.4cm}\\
\includegraphics[trim={1.15cm 1cm 1cm .5cm},clip,scale=.4]{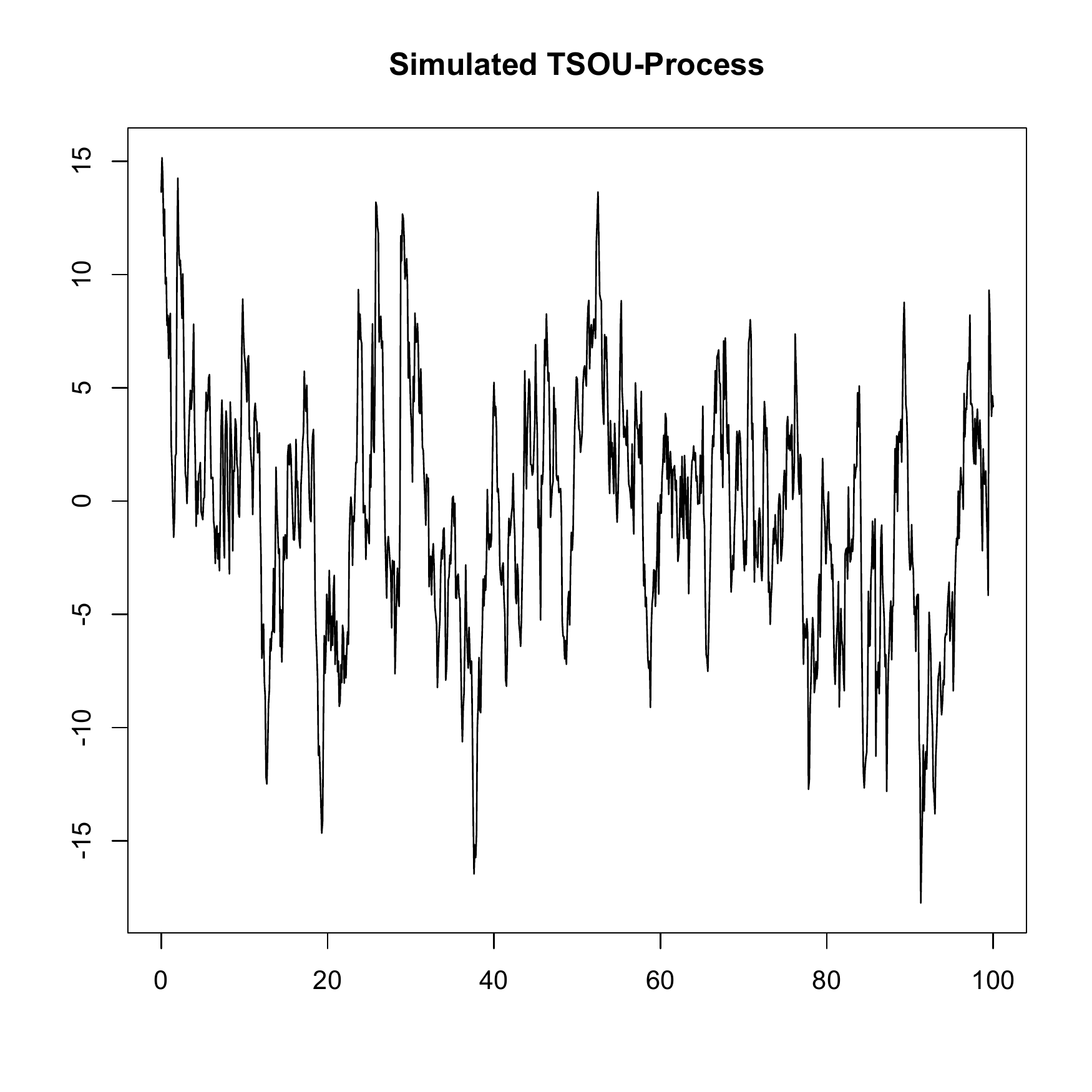} & \includegraphics[trim={1.15cm 1cm 1cm .5cm},clip,scale=.42]{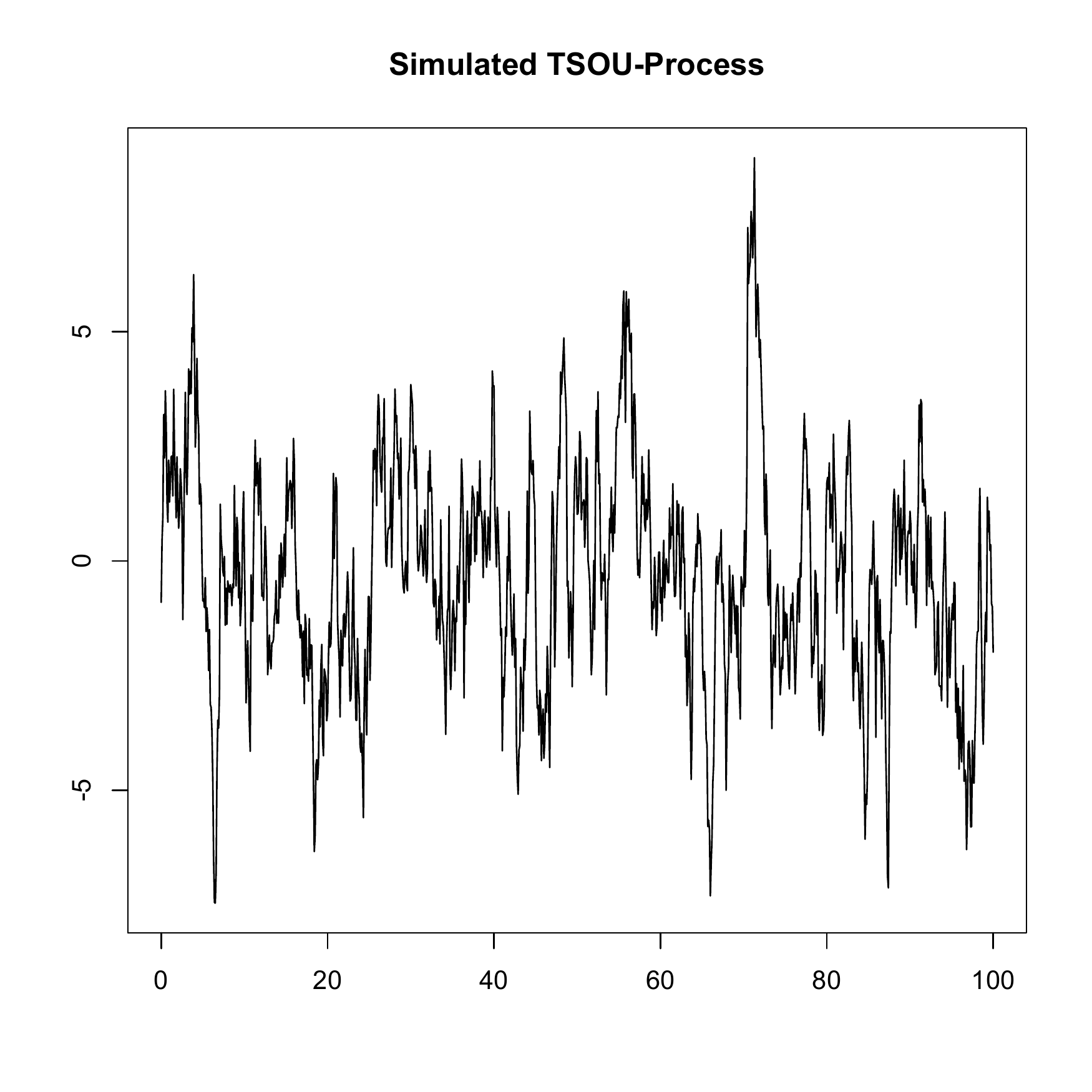} 
\\
$\alpha=1.5,\ell=1$ & $\alpha=1.5,\ell=5$\vspace{.42cm}
\end{tabular}
\vspace{-.5cm}\caption{Simulated TSOU-processes for several choices of the parameters. In all cases the simulated increments are of length $t=0.1$.}
\end{figure}

\begin{figure}\label{fig: limit}
\linespread{.75}
\begin{tabular}{cc}
\includegraphics[trim={1.15cm 1cm 1cm .5cm},clip,scale=.42]{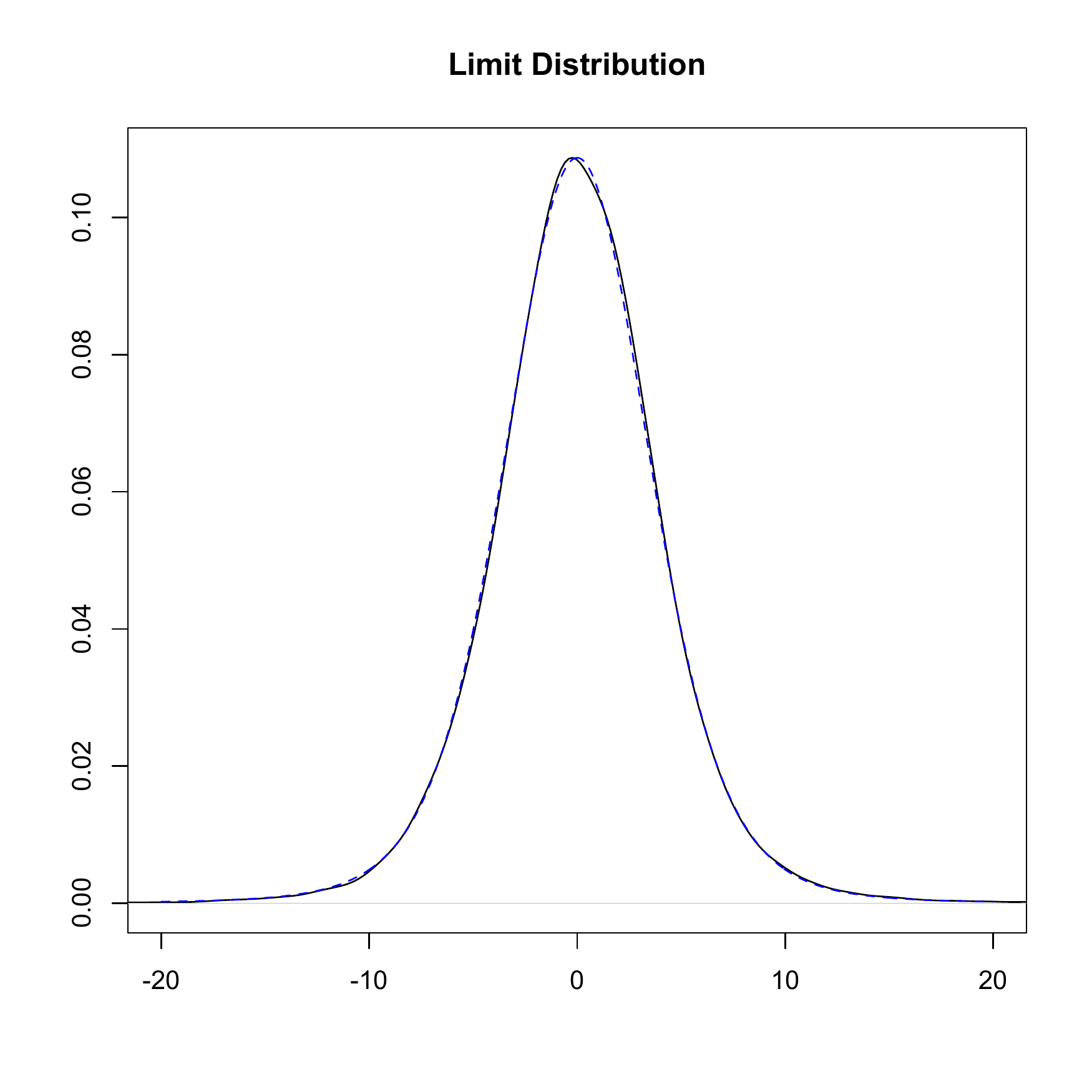} & \includegraphics[trim={1.15cm 1cm 1cm .5cm},clip,scale=.42]{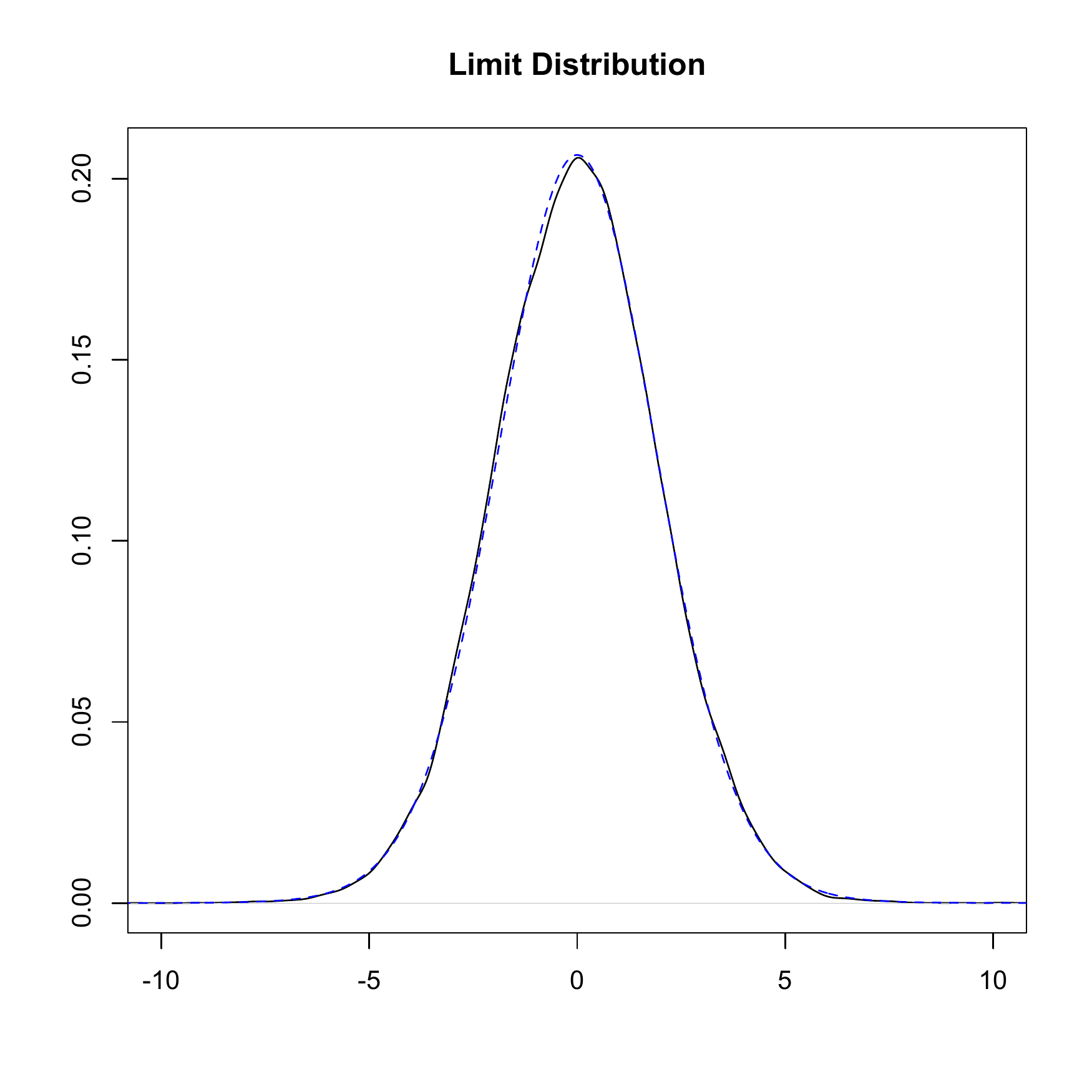} \\
$\alpha=1,\ell=1$ & $\alpha=1,\ell=5$ \vspace{.4cm}\\
\includegraphics[trim={1.15cm 1cm 1cm .5cm},clip,scale=.4]{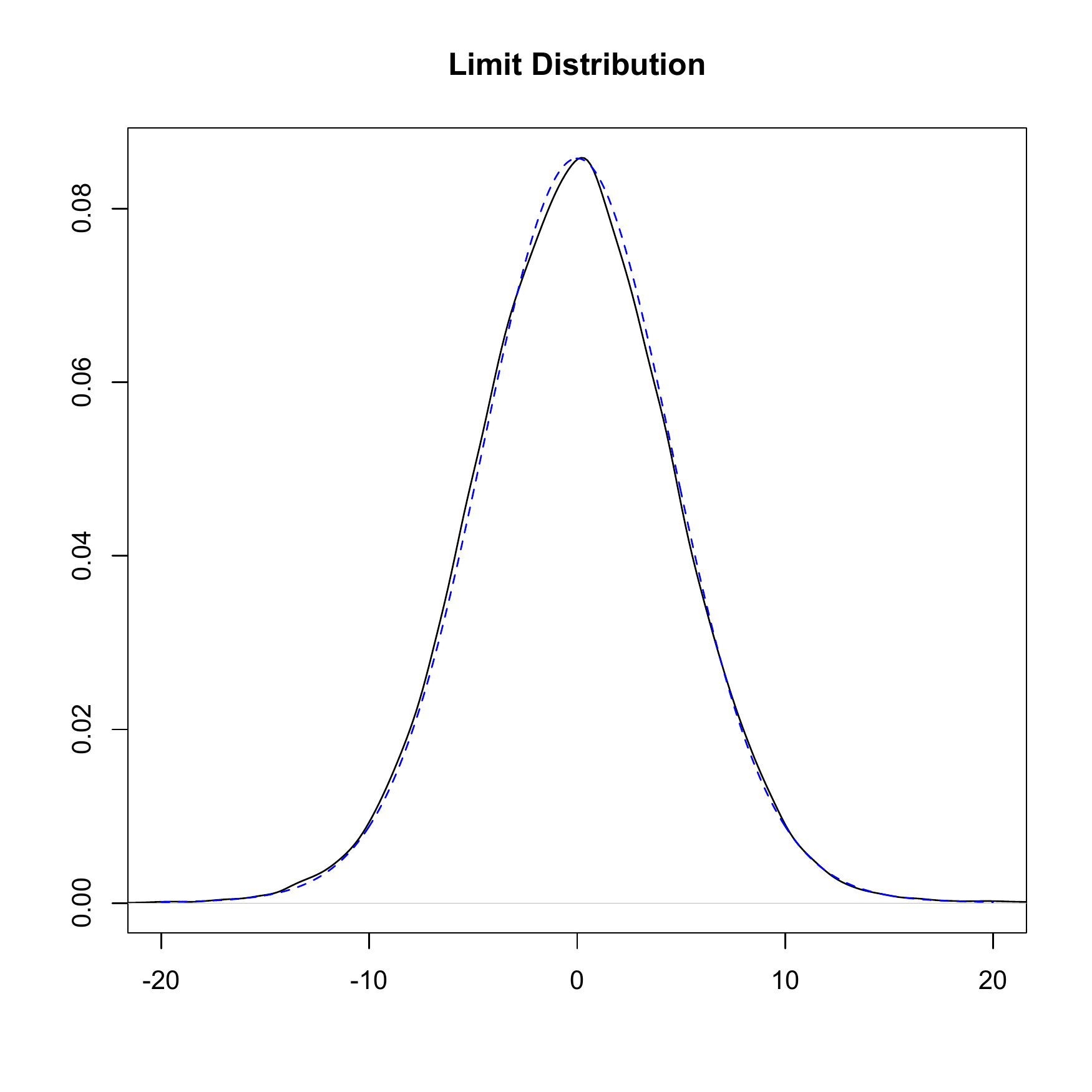} & \includegraphics[trim={1.15cm 1cm 1cm .5cm},clip,scale=.42]{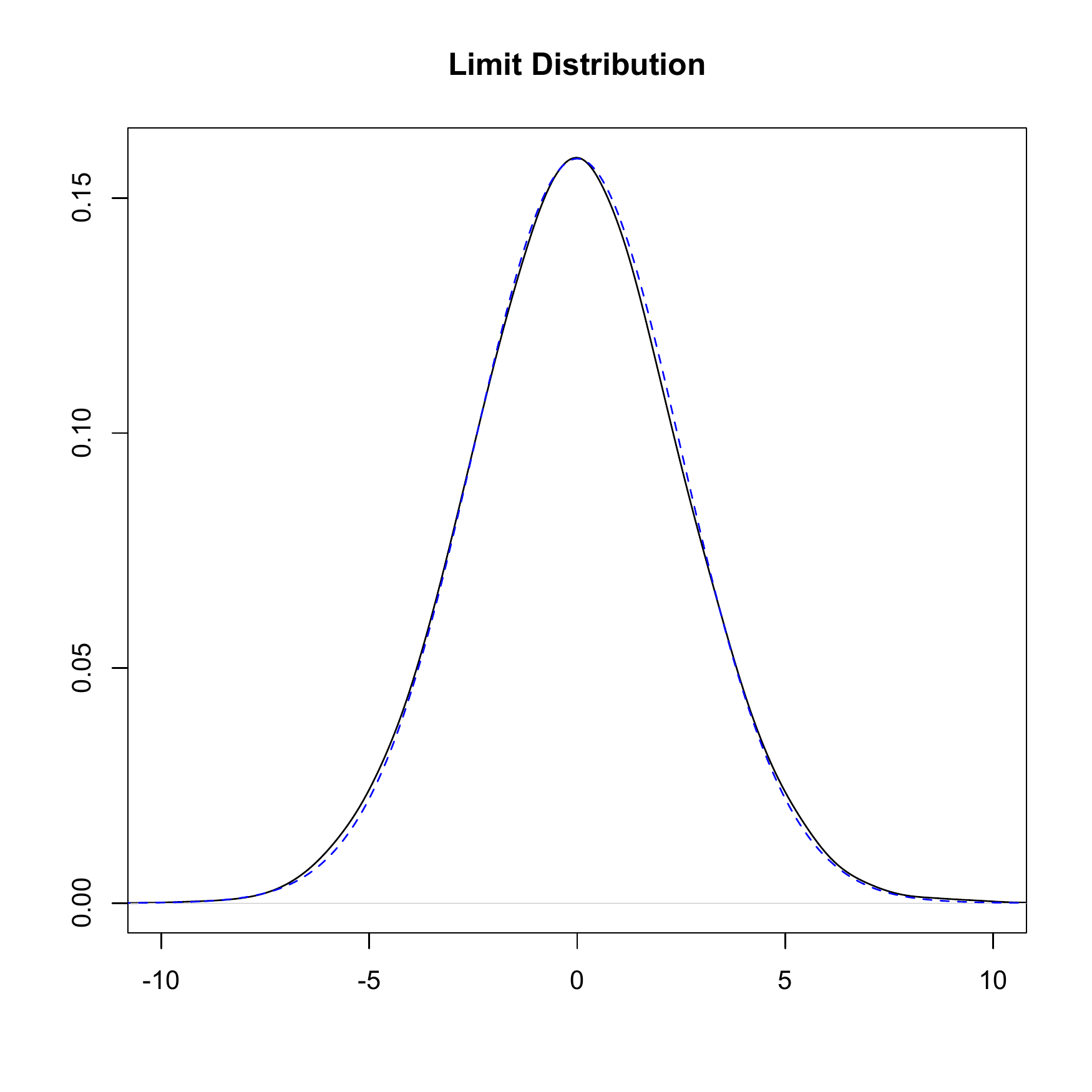}
 \\
$\alpha=1.5,\ell=1$ & $\alpha=1.5,\ell=5$\vspace{.42cm}
\end{tabular}
\vspace{-.5cm}\caption{For each choice of the parameters, we simulate a TSOU-process for $50000$ time steps in increments of $t=0.1$. For each process, we plot the KDE (solid line) overlaid with the true pdf of the limiting distribution (dashed line).}
\end{figure}

For our simulations, we simulate $X_0$ and $X_1$ using the inverse transform method as implemented in the SymTS package, we simulate the $V_i$'s using \eqref{eq: sim V}, and we simulate the $W_i$'s using Algorithm 1. For simplicity, we take $\lambda=1$, $c=10$, and $t=0.1$. We start each path by simulating an observation from the limiting distribution. We then simulate the process at $1000$ time steps. Since the time increment is $t=0.1$, this leads to a simulation of the process up to time $T=100$. Plots of these processes for several choices of $\alpha$ and $\ell$ are given in Figure 1. Further, to check whether we are simulating from the correct limiting distribution, we simulate the process for $50000$ time steps and then plot the kernel density estimator (KDE) based on these observations. Figure 2 gives this plot for each choice of the parameters. The plots are overlaid with the true pdf of the limiting distribution. Since we begin each process  in the limiting distribution, we do not need a burn-in period.

We conclude this section by noting that the result in Theorem \ref{thrm: main} also holds for power tempered stable distributions with $\alpha\in[0,1)$ and can be used for simulation in this case. When $\alpha\in(0,1)$ a different methodology for simulating such TSOU-processes was given in \cite{Grabchak:2020}. However, that approach does not use the fact that, in this case, it is easy to simulate from $R^1$. Instead, it uses a more complicated methodology based on a version of our Algorithm 2. For this reason, we recommend using the methodology suggested by the current paper in this case.

\section{Proofs}\label{sec: proofs}

We begin with a technical lemma.

\begin{lemma}\label{lemma: exp approx}
1. For any $t>0$  and any integer $k\ge1$
$$
\left( 1- e^{-t}\sum_{n=0}^{k-1} \frac{t^n}{n!} \right) =  \frac{1}{(k-1)!}\int_0^{t} e^{-x}x^{k-1}\rd x.
$$
2. For any $0\le a<b$ and any integer $k\ge1$
\begin{eqnarray*}
e^{-a} - e^{-b}\sum_{n=0}^{k-1} \frac{(b-a)^n}{n!} = \frac{e^{-a}}{(k-1)!}\int_0^{b-a} e^{-x}x^{k-1}\rd x
\end{eqnarray*}
and
$$
\frac{e^{-b}}{k!}(b-a)^{k}\le e^{-a} - e^{-b}\sum_{n=0}^{k-1} \frac{(b-a)^n}{n!} \le \frac{e^{-a}}{k!}(b-a)^{k}. 
$$
\end{lemma}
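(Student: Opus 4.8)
The plan is to prove Part 1 first, since both statements of Part 2 follow from it almost immediately. For Part 1, I would fix $t>0$ and induct on $k$. Writing $I_k(t) = \frac{1}{(k-1)!}\int_0^t e^{-x}x^{k-1}\rd x$, the base case $k=1$ is the elementary evaluation $\int_0^t e^{-x}\rd x = 1-e^{-t}$, which matches the claimed right-hand side (a single summand $e^{-t}t^0/0!$). For the inductive step I would integrate by parts, differentiating $x^{k-1}$ and integrating $e^{-x}$; the boundary term contributes $-e^{-t}t^{k-1}/(k-1)!$ and the leftover integral is exactly $I_{k-1}(t)$. Substituting the induction hypothesis $I_{k-1}(t) = 1 - e^{-t}\sum_{n=0}^{k-2}t^n/n!$ then supplies precisely the extra summand $e^{-t}t^{k-1}/(k-1)!$ and closes the induction.

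For the first identity in Part 2, I would apply Part 1 with $t$ replaced by $b-a$ and then multiply through by $e^{-a}$. Since $e^{-a}e^{-(b-a)} = e^{-b}$, the product $e^{-a}\cdot e^{-(b-a)}\sum_{n=0}^{k-1}(b-a)^n/n!$ collapses to $e^{-b}\sum_{n=0}^{k-1}(b-a)^n/n!$, yielding exactly the stated equality with the integral on the right.

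For the two-sided bound, I would work directly from the integral representation just obtained. On the interval $[0,b-a]$ the factor $e^{-x}$ is decreasing, hence lies between $e^{-(b-a)}$ and $1$; replacing $e^{-x}$ by these two constants and using $\int_0^{b-a}x^{k-1}\rd x = (b-a)^k/k$ sandwiches the integral between $e^{-(b-a)}(b-a)^k/k$ and $(b-a)^k/k$. Multiplying by $e^{-a}/(k-1)!$ and invoking $k\cdot(k-1)! = k!$ together with $e^{-a}e^{-(b-a)} = e^{-b}$ produces exactly the claimed inequalities (with the left inequality strict-free since $b-a>0$).

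There is no substantial obstacle here: the result is the classical relation between a Poisson tail sum and the Erlang (incomplete gamma) cdf. The only points requiring care are the factorial bookkeeping in the integration-by-parts step of Part 1, so that the boundary term reassembles into the $k$-th summand with the correct normalization, and confirming that the monotonicity estimate in the last step is legitimate, which it is because the hypothesis $0\le a<b$ guarantees a nondegenerate integration interval.
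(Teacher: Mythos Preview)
Your proposal is correct and follows essentially the same approach as the paper, which simply states that Part~1 follows by integration by parts and induction on $k$, and that Part~2 follows immediately from Part~1. Your write-up makes explicit exactly the steps the paper leaves implicit, including the monotonicity bound on $e^{-x}$ used for the two-sided inequality.
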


\begin{proof}
The first part follows by integration by parts and induction on $k$. The second follows immediately from the first.
\end{proof}

\begin{proof}[Proof of Proposition \ref{prop: K}.]
First note that
\begin{eqnarray*}
K_{\beta,\gamma,p,\eta} &=& \frac{1}{p\Gamma(\gamma)}\int_0^\infty \int_{0}^{u(\eta-1)}e^{-x-u} x^{\gamma-1} \rd x u^{-1-\beta/p}\rd u \\ 
&=& \frac{1}{p\Gamma(\gamma)}\int_0^\infty \int_{u}^{u\eta}e^{-v} (v-u)^{\gamma-1} \rd v u^{-1-\beta/p}\rd u\\
&=& \frac{1}{p\Gamma(\gamma)}\int_0^\infty e^{-v} \int^{v}_{v/\eta}(v-u)^{\gamma-1}u^{-1-\beta/p} \rd u \rd v\\
&=& \frac{1}{p\Gamma(\gamma)}\int_0^\infty e^{-v}v^{\gamma-1} \int^{v}_{v/\eta}(1-u/v)^{\gamma-1}u^{-1-\beta/p} \rd u \rd v\\
&=& \frac{\Gamma(\gamma-\beta/p)}{p\Gamma(\gamma)}\int^{1}_{1/\eta}(1-u)^{\gamma-1}u^{-1-\beta/p} \rd u,
\end{eqnarray*}
where the first, second, and fifth lines follow by change of variables. The asymptotic formulas follow by L'H\^ opital's rule, except in the case when $\eta\to\infty$ and $\beta<0$. In this case, they follow by basic properties of the beta function. The last part of the proposition follows by applying the Binomial Theorem.
\end{proof}

\begin{proof}[Proof of Proposition \ref{prop: IGa bound}.]
Note that
\begin{eqnarray*}
f_{\beta,\gamma,p,\eta}(u) &\le& \frac{1}{\Gamma(\gamma)K_{\beta,\gamma,p,\eta}}  \int_0^{u^p(\eta-1)} x^{\gamma-1}\rd x e^{-u^p} u^{-1-\beta}\\
&=& \frac{(\eta-1)^\gamma}{\Gamma(\gamma+1)K_{\beta,\gamma,p,\eta}} e^{-u^p} u^{p\gamma-\beta-1}.
\end{eqnarray*}
From here the result is immediate.
\end{proof}

\begin{proof}[Proof of Proposition \ref{prop: BDLP for TS}.]
Theorem 2.17 in \cite{Rocha-Arteaga:Sato:2019} gives the formula for the shift (after taking into account the fact that we are using a different parametrization) and implies that
\begin{eqnarray*}
M(B) &=& \int_{\mathbb S^{d-1}} \int_0^\infty 1_{B}\left(u \xi\right)\left(-\frac{\partial}{\partial u}(q(\xi,u^p)u^{-\alpha}) \right)  \rd u \sigma(\rd \xi)\\
&=& \int_{\mathbb S^{d-1}} \int_{(0,\infty)} \int_0^\infty 1_{B}\left(u \xi\right)\left(spu^p+\alpha \right)u^{-1-\alpha} e^{-u^ps} \rd u Q_\xi(\rd s) \sigma(\rd \xi)\\
&=& \int_{\mathbb R^{d}} \int_0^\infty 1_{B}\left(u \frac{x}{|x|}\right)\left(|x|pu^p+\alpha \right)u^{-1-\alpha} e^{-u^p|x|} \rd u Q(\rd x)\\
&=& \int_{\mathbb R^{d}} \int_0^\infty 1_{B}\left(u\frac{x}{|x|^{1+1/p}}\right)\left(pu^p+\alpha \right)u^{-1-\alpha} e^{-u^p} \rd u |x|^{\alpha/p}Q(\rd x).
\end{eqnarray*}
Now applying \eqref{eq: R} gives the result.
\end{proof}

\begin{lemma}\label{lemma:second int}
In the context of Theorem \ref{thrm: main}, $Y$ is a Markov process with temporally homogenous transition function $P_t(y,\rd x)$ 
having characteristic function
$\int_{\mathbb R^d}e^{i\langle x,z\rangle} P_t(y,\rd x) = \exp\left\{C_t(y,z)\right\}$, where
\begin{eqnarray*}
C_t(y,z) &=& e^{-\alpha\lambda t}R(\mathbb R^d)K_{\alpha,\gamma,p,e^{\lambda tp}} \int_{\mathbb R^{d}}\int_0^\infty \psi_{0}(z,ux) f_{\alpha,\gamma,p,e^{\lambda tp}}(u) \rd u  R^1(\rd x)\\
&&\quad+ \int_{\mathbb R^d}\int_0^\infty \psi_\alpha(z,ux) u^{-1-\alpha} e^{-u^p}\rd u R_0(\rd x)  \\
&&\quad + \sum_{n=1}^{\gamma-1} \int_{\mathbb R^{d}}  \int_0^\infty \psi_{\alpha-np}(ze^{-\lambda t},ux)e^{-u^p}  u^{-1-(\alpha-np)} \rd u R_n(\rd x)\\
&& \quad+ ie^{-\lambda t}\langle y,z\rangle +  i \left(1-e^{-\lambda t}\right)\langle  b,z\rangle - \sum_{n=0}^{\gamma-1} i\langle z,b_n\rangle
\end{eqnarray*}
and
\begin{eqnarray*}
\psi_\alpha(z,x) = e^{i\langle z,x \rangle} - 1 - i\langle z,x \rangle 1_{[\alpha\ge1]}.
\end{eqnarray*}
\end{lemma}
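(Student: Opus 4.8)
The plan is to start from the standard representation of an OU-type process and reduce everything to a single L\'evy--Khintchine computation. Writing the strong solution over $[s,s+t]$ as $Y_{s+t}=e^{-\lambda t}Y_s+\int_s^{s+t}e^{-\lambda(s+t-u)}\rd Z_u$ and using that $Z$ has independent and stationary increments, the integral is independent of $Y_s$ and has the same law as $\int_0^t e^{-\lambda(t-u)}\rd Z_u$; this yields the Markov property and temporal homogeneity at once. Since for a deterministic integrand $f$ one has $\log\rE\exp\{i\langle z,\int_0^t f(u)\rd Z_u\rangle\}=\int_0^t C_Z(f(u)z)\rd u$, where $C_Z(w)=\log\rE e^{i\langle w,Z_1\rangle}$ (a Riemann-sum approximation using independent increments, see \cite{Sato:1999} and \cite{Rocha-Arteaga:Sato:2019}), I obtain $C_t(y,z)=ie^{-\lambda t}\langle y,z\rangle+\int_0^t C_Z(e^{-\lambda(t-u)}z)\rd u$. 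The remaining work is to evaluate this last integral and recognize it as the sum in the statement.

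Next I would insert $C_Z(w)=i\lambda\langle b,w\rangle+\lambda\int_{\mathbb R^d}\psi_\alpha(w,x)M(\rd x)$, valid because $Z_1\sim\ID(0,\lambda M,\lambda b)_{h_\alpha}$ with $h_\alpha=1_{[\alpha\ge1]}$ (Proposition \ref{prop: BDLP for TS}). The shift part integrates immediately to $i(1-e^{-\lambda t})\langle b,z\rangle$. For the jump part, after substituting the formula for $M$ from Proposition \ref{prop: BDLP for TS}, the crux is the change of variables $r=e^{-\lambda(t-u)}$ followed by $w=ur$, together with Fubini, which turns the inner integral over $r\in(e^{-\lambda t},1)$ into $\int_{e^{-\lambda t}}^1(\alpha r^{\alpha-1}+pw^pr^{\alpha-p-1})e^{-w^pr^{-p}}\rd r$. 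I recognize the integrand as $\frac{\rd}{\rd r}\big[r^\alpha e^{-w^pr^{-p}}\big]$, so it telescopes to the master kernel
\[
\rho(w)=w^{-1-\alpha}\big(e^{-w^p}-e^{-\alpha\lambda t}e^{-w^pe^{p\lambda t}}\big),
\]
and the jump part becomes $\int_{\mathbb R^d}\int_0^\infty\psi_\alpha(z,wx)\rho(w)\rd w\,R(\rd x)$. This telescoping is the slick step that drives the whole result.

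I would then split $\rho=\rho_{X_0}+\rho_P+\rho_{X}$ into three pieces matching the three families of ingredients. Using the integer-$\gamma$ expansion of $f_{\alpha,\gamma,p,e^{p\lambda t}}$ together with Lemma \ref{lemma: exp approx}, write $e^{-w^p}-e^{-\alpha\lambda t}e^{-w^pe^{p\lambda t}}$ as $(1-e^{-\alpha\lambda t})e^{-w^p}+e^{-\alpha\lambda t}\Phi(w)+e^{-\alpha\lambda t}\sum_{n=1}^{\gamma-1}\frac{(e^{p\lambda t}-1)^n}{n!}w^{np}e^{-w^pe^{p\lambda t}}$, where $\Phi(w)=K_{\alpha,\gamma,p,e^{p\lambda t}}f_{\alpha,\gamma,p,e^{p\lambda t}}(w)\,w^{1+\alpha}$, so that $\rho_P(w)=e^{-\alpha\lambda t}K_{\alpha,\gamma,p,e^{p\lambda t}}f_{\alpha,\gamma,p,e^{p\lambda t}}(w)$. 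The first piece is exactly the kernel of $\ts(R_0,0)$ with $R_0=(1-e^{-\alpha\lambda t})R$; the second, paired with $\psi_0$, is the compound-Poisson cumulant $P$ (after using $R(\mathbb R^d)R^1=R$ and the definition of $K$); and the $n$th term of the third, after the substitution $w=ue^{-\lambda t}$, is the kernel of $\mathrm{TS}^p_{\alpha-np}(R_n,0)$ evaluated at $ze^{-\lambda t}$ with $R_n=\frac1{n!}(1-e^{-p\lambda t})^nR$.

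The main obstacle is the bookkeeping of the compensators. Each target piece carries its own $h$-function ($\psi_0$ for the Poisson part, $\psi_{\alpha-np}$ for the $X_n$ parts), whereas $\rho$ is paired with $\psi_\alpha$; the conversion cannot be done globally because, when $\alpha\ge1$, the $\psi_0$-integral against $\rho$ diverges near the origin. So I must first split $\rho$ as above, noting that each resulting integral converges when paired with $\psi_\alpha$ (using $p\gamma>\alpha$ and the moment bounds from $\rE[W^\kappa]=K_{(\alpha-\kappa),\gamma,p,e^{p\lambda t}}/K_{\alpha,\gamma,p,e^{p\lambda t}}$), and only then replace $\psi_\alpha$ by the correct compensator inside each piece. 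The differences $\psi_\alpha-\psi_0=-1_{[\alpha\ge1]}i\langle z,\cdot\rangle$ and $\psi_\alpha-\psi_{\alpha-np}=-1_{[1\le\alpha<1+np]}i\langle z,\cdot\rangle$ produce precisely the linear terms $-\sum_{n=0}^{\gamma-1}i\langle z,b_n\rangle$; evaluating the associated moment integrals, namely $\int_0^\infty w\rho_P(w)\rd w=e^{-\alpha\lambda t}K_{(\alpha-1),\gamma,p,e^{p\lambda t}}$ for $b_0$ and a Gamma integral giving $e^{-\lambda t}p^{-1}\Gamma\big(\tfrac{1-\alpha+np}{p}\big)\int xR_n(\rd x)$ for $b_n$, confirms they match the stated constants. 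Assembling the four contributions then yields $C_t(y,z)$ exactly.
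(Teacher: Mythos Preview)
Your proof is correct and follows essentially the same route as the paper's: both start from the standard OU cumulant formula (the paper quotes Proposition 2.13 in \cite{Rocha-Arteaga:Sato:2019}, you derive it directly), then collapse the time integral via a telescoping derivative identity to the same master kernel $w^{-1-\alpha}(e^{-w^p}-e^{-\alpha\lambda t}e^{-w^pe^{p\lambda t}})$, split it into the $X_0$, $X_n$, and compound-Poisson pieces, and finally adjust the compensators to produce the $b_n$ terms. The only cosmetic difference is the order of the substitutions in the telescoping step (the paper integrates in $u$ after substituting $v=ue^{-\lambda s}$, you integrate in $r=e^{-\lambda(t-s)}$ after setting $w=ur$), and you are somewhat more explicit about the compensator bookkeeping; one small caveat is that you overload the symbol $u$ for both the time variable and the radial variable in $M$, which should be disentangled in a final write-up.
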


\begin{proof}
Proposition 2.13 in \cite{Rocha-Arteaga:Sato:2019} implies that
\begin{eqnarray*}
C_t(y,z) =  ie^{-\lambda t}\langle y,z\rangle +  i   \left(1-e^{-\lambda t}\right) \langle  b,z\rangle +  \lambda \int_0^t  \int_{\mathbb R^d}\psi_\alpha(e^{-\lambda s}z,x) M(\rd x)\rd s,
\end{eqnarray*}
where $M$ is as in Proposition \ref{prop: BDLP for TS}. Now using the fact that $\psi_\alpha(az,x)=\psi_\alpha(z,ax)$ for any $a\in\mathbb R$
\begin{eqnarray*}
&&\lambda\int_0^t  \int_{\mathbb R^d}\psi_\alpha(e^{-\lambda s}z,x) M(\rd x)\rd s \\
&&\qquad = \lambda\int_{\mathbb R^d}\int_0^\infty \int_0^t  \psi_\alpha(z,xue^{-\lambda s})(\alpha+pu^p)u^{-1-\alpha} e^{-u^p}\rd s \rd uR(\rd x)\\
&&\qquad = \int_{\mathbb R^d}\int_0^\infty \int_{ue^{-\lambda t}}^u  \psi_\alpha(z,xv)(\alpha+pu^p)u^{-1-\alpha} e^{-u^p}v^{-1}\rd v \rd uR(\rd x)\\
&&\qquad = \int_{\mathbb R^d}\int_0^\infty  \psi_\alpha(z,xv)\int_{v}^{ve^{\lambda t}}  (\alpha+pu^p)u^{-1-\alpha} e^{-u^p} \rd u v^{-1}\rd v R(\rd x)\\
&&\qquad = \int_{\mathbb R^d}\int_0^\infty  \psi_\alpha(z,xv)\left(e^{-v^p}-e^{-\alpha\lambda t}e^{-v^pe^{p\lambda t}}\right) v^{-1-\alpha}\rd v R(\rd x)\\
&&\qquad= \left(1-e^{-\lambda t\alpha}\right)\int_{\mathbb R^d}\int_0^\infty \psi_\alpha(z,ux) u^{-1-\alpha} e^{-u^p}\rd u R(\rd x)  \\
&&\qquad\quad + e^{-\alpha\lambda t} \int_{\mathbb R^{d}}\int_0^\infty \psi_\alpha(z,ux) \left( e^{-u^p}-e^{-u^p e^{\lambda tp}} \right)  u^{-1-\alpha} \rd u  R(\rd x),
\end{eqnarray*}
where the fifth line follows by the fact that 
\begin{eqnarray}\label{eq: deriv}
-\frac{\rd}{\rd u}u^{-\alpha}e^{-u^p} =  (\alpha+pu^p)u^{-1-\alpha} e^{-u^p}.
\end{eqnarray}
From here we just need to put the last line into the appropriate form. This line can be written as 
\begin{eqnarray*}
&&\sum_{n=1}^{\gamma-1} e^{(pn-\alpha)\lambda t} \int_{\mathbb R^{d}}  \int_0^\infty \psi_{\alpha}(z,ux)e^{-u^p e^{\lambda tp}}  u^{-1-(\alpha-np)} \rd u R_n(\rd x)\\
&& \quad+ e^{-\alpha\lambda t} \int_{\mathbb R^{d}}\int_0^\infty \psi_\alpha(z,ux) \left(e^{-u^p} - e^{-u^p e^{\lambda tp}}\sum_{n=0}^{\gamma-1}  \frac{(e^{p\lambda t}-1)^n}{n!} u^{np} \right)  u^{-1-\alpha} \rd u  R(\rd x)\\
&& = \sum_{n=1}^{\gamma-1} \int_{\mathbb R^{d}}  \int_0^\infty \psi_{\alpha}(z,e^{-\lambda t}ux)e^{-u^p}  u^{-1-(\alpha-np)} \rd u R_n(\rd x) \\
&& \quad+ e^{-\alpha\lambda t} K_{\alpha,\gamma,p,e^{\lambda tp}} \int_{\mathbb R^{d}}\int_0^\infty \psi_{\alpha}(z,ux) f_{\alpha,\gamma,p,e^{\lambda tp}}(u) \rd u  R(\rd x)\\
&& = \sum_{n=1}^{\gamma-1} \int_{\mathbb R^{d}}  \int_0^\infty \psi_{\alpha-np}(e^{-\lambda t}z,ux)e^{-u^p}  u^{-1-(\alpha-np)} \rd u R_n(\rd x) - \sum_{n=0}^{\gamma-1}i\langle z,b_n\rangle\\
&& \quad+ e^{-\alpha\lambda t}R(\mathbb R^d)K_{\alpha,\gamma,p,e^{\lambda tp}} \int_{\mathbb R^{d}}\int_0^\infty \psi_{0}(z,ux) f_{\alpha,\gamma,p,e^{\lambda tp}}(u) \rd u  R^1(\rd x),
\end{eqnarray*}
which completes the proof.
\end{proof}

\begin{proof}[Proof of Theorem \ref{thrm: main}.]
The result follows by noting that that the characteristic function of the random variable given by \eqref{eq: trans RV} is $\int_{\mathbb R^d}e^{i\langle x,y\rangle} P_t(y,\rd x) = \exp\left\{C_t(y,z)\right\}$, where $C_t(y,z)$ is of the form required by Lemma  \ref{lemma:second int}.
\end{proof}

\begin{proof}[Proof of Proposition \ref{prop:for AR}.]
Lemma \ref{lemma: exp approx} implies that
\begin{eqnarray*}
f_\xi(u) &=&\kappa_\xi \int_{(0,\infty)} \left( e^{-u^ps} -e^{-u^p e^{p\lambda t}s}\sum_{n=0}^{\gamma-1}\frac{1}{n!}(e^{p\lambda t}-1)^ns^nu^{np} \right) Q_\xi(\rd s)u^{-\alpha-1}\\
&\le&  \kappa_\xi\frac{(e^{p\lambda t}-1)^{\gamma}}{\gamma!}u^{-\alpha-1} \int_{(0,\infty)} e^{-u^p s} (su^p)^\gamma Q_\xi(\rd s).\end{eqnarray*}
It follows that
$$
f_\xi(u)\le  \kappa_\xi\frac{(e^{p\lambda t}-1)^{\gamma}}{\gamma!}u^{\gamma p-\alpha-1} \int_{(0,\infty)} e^{-u^p s} s^\gamma Q_\xi(\rd s)\le \kappa_\xi C_{\xi,\gamma}u^{p\gamma-\alpha-1}.
$$
and similarly, since $e^{-x}x^\gamma \le e^{-\gamma}\gamma^\gamma$ for $x\ge0$ and $Q_\xi$ is a probability measure, we have
$$
f_\xi(u)\le e^{-\gamma}\gamma^\gamma \frac{\kappa_\xi}{\gamma!}(e^{p\lambda t}-1)^{\gamma}u^{-\alpha-1}.
$$
On the other hand, since $0\le \ell_0(\xi,u)\le 1$ and $\sum_{n=0}^{\gamma-1}\ell_n(\xi,ue^{\lambda t})u^{np}\ge0$ it follows that for any $u>0$
\begin{eqnarray*}
f_\xi(u) = \kappa_\xi\left(\ell_0(\xi,u) -\sum_{n=0}^{\gamma-1}\ell_n(\xi,ue^{\lambda t})u^{np}\right)u^{-1-\alpha} \le \kappa_\xi u^{-1-\alpha}.
\end{eqnarray*}
Combining these three bounds gives the result.
\end{proof}

\begin{proof}[Proof of Proposition \ref{prop: alt bound}.]
Lemma \ref{lemma: exp approx} implies that
\begin{eqnarray*}
f_\xi(u) &=& \kappa_\xi \int_{[\zeta,\infty)} \left( e^{-u^ps} -e^{-u^p e^{p\lambda t}s}\sum_{n=0}^{\gamma-1}\frac{(e^{p\lambda t}-1)^n}{n!}s^nu^{np} \right) Q_\xi(\rd s)u^{-1-\alpha}\\
&\le&\kappa_\xi  \frac{(e^{p\lambda t}-1)^\gamma}{\gamma!} \int_{[\zeta,\infty)} e^{-u^ps} s^\gamma Q_\xi(\rd s) u^{p\gamma-\alpha-1}\\
&\le&\kappa_\xi  C_{\xi,\gamma} e^{-u^p \zeta}  u^{p\gamma-1-\alpha}.
\end{eqnarray*}
From here the result follows.
\end{proof}

\end{document}